\newtheorem{theorem}{Theorem}[section]
\newtheorem{lemma}[theorem]{Lemma}
\newtheorem{corollary}[theorem]{Corollary}
\newtheorem{proposition}[theorem]{Proposition}
\newtheorem{remark}[theorem]{Remark}
\theoremstyle{definition}
\newtheorem{example}[theorem]{Example}
\newcommand{\beqa}{\begin{eqnarray*}}
\newcommand{\eeqa}{\end{eqnarray*}}
\DeclareMathOperator*{\supp}{supp}
\DeclareMathOperator*{\Sp}{Sp}
\DeclareMathOperator*{\Mp}{Mp}
\DeclareMathOperator*{\Sym}{Sym}
\DeclareMathOperator*{\GL}{GL}
\newcommand{\field}[1]{\mathbb{#1}}
\newcommand{\bR}{\field{R}}        
\newcommand{\bN}{\field{N}}        
\newcommand{\bZ}{\field{Z}}        
\newcommand{\bC}{\field{C}}        
\def\la{\lambda}
\def\eps{\epsilon}
\def\cF{\mathcal{F}}              
\def\cS{\mathcal{S}}
\def\cD{\mathcal{D}}
\def\cJ{\mathcal{J}}
\def\cC{\mathcal{C}}
\def\a{\aleph}
\def\rd{\bR^d}
\def\rdd{{\bR^{2d}}}
\def\lrd{L^2(\rd)}
\def\R{\right)}
\def\l{\langle}
\def\r{\rangle}
\def\<{\left<}
\def\>{\right>}
\def\mv1{M_v^1}
\def\mn{(m,n)}
\def\mn'{(m',n')}
\newcommand{\norm}[1]{\lVert#1\rVert}
\def\a{\alpha}
\def\i{\infty}
\def\R{\mathbb{R}}
\def\Ren{\mathbb{R}^d}
\def\E{\mathcal{E}}
\def\Sn2{S_{2}(L^{2}(\Ren))}
\def\S1{S_{1}(L^{2}(\Ren))}
\def\sig00{\sigma_{0,0}}
\def\la{\langle}
\def\ra{\rangle}
\begin{document}

\begin{abstract}
	We establish anisotropic uncertainty principles (UPs) for general metaplectic operators acting on
	$L^2(\mathbb{R}^d)$, including degenerate cases associated with symplectic matrices whose
	$B$-block has nontrivial kernel. In this setting, uncertainty phenomena are shown to be intrinsically
	directional and confined to an effective phase-space dimension given by $\mathrm{rank}(B)$.
	
	First, we prove sharp Heisenberg-Pauli-Weyl type inequalities involving only the directions
	corresponding to $\ker(B)^\perp$, with explicit lower bounds expressed in terms of geometric
	quantities associated with the underlying symplectic transformation. We also provide a complete
	characterization of all extremizers, which turn out to be partially Gaussian functions with free
	behavior along the null directions of $B$.
	
	Building on this framework, we extend the Beurling-H\"ormander theorem to the metaplectic
	setting, obtaining a precise polynomial-Gaussian structure for functions satisfying suitable
	exponential integrability conditions involving both $f$ and its metaplectic transform. Finally,
	we prove a Morgan-type (or Gel'fand--Shilov type) uncertainty principle for metaplectic operators,
	identifying a sharp threshold separating triviality from density of admissible functions and
	showing that this threshold is invariant under metaplectic transformations.
	
	Our results recover the classical Fourier case and free metaplectic transformations as special
	instances, and reveal the geometric and anisotropic nature of uncertainty principles in the
	presence of symplectic degeneracies.
\end{abstract}

\title[Anisotropic uncertainty principles for metaplectic operators]{Anisotropic uncertainty principles for metaplectic operators}

\author{Elena Cordero}
\address{Universit\`a di Torino, Dipartimento di Matematica, via Carlo Alberto 10, 10123 Torino, Italy}
\email{elena.cordero@unibo.it}
\author{Gianluca Giacchi}
\address{Universit\`a della Svizzera Italiana, Faculty of Informatics, Via la Santa 1, 6900 Lugano, Switzerland}
\email{gianluca.giacchi@usi.ch}
\author{Edoardo Pucci}
\address{Universit\`a di Torino, Dipartimento di Matematica, via Carlo Alberto 10, 10123 Torino, Italy}
\email{edoardo.pucci@unito.it}

\thanks{}
\subjclass[2020]{42A38,35S30,35B05,81S07}
\keywords{Uncertainty principles, Fourier transform, symplectic group, metaplectic operators, Schr\"{o}dinger equations}\maketitle

\section{Introduction}
Uncertainty principles are a fundamental theme in harmonic analysis, expressing the intrinsic impossibility of simultaneously localizing a function and its Fourier transform in both space and frequency. Originating in the seminal works of Heisenberg, Weyl, Hardy, and Pauli, these principles have evolved into a broad and flexible framework encompassing sharp inequalities, anisotropic formulations, characterization of extremizers, and extensions adapted to non-Euclidean geometries, integral transforms, and dynamical settings. We refer to the surveys \cite{BDsurvey,FMBAMS21,BVJLMS2026} for a comprehensive overview, as well as to \cite{CF2015,CF2017,CEKPV2010,EKPV2006,FS,ST2024,WW21} for several influential contributions.

Beyond their intrinsic interest, uncertainty principles play a central role in partial differential equations, time-frequency analysis, and mathematical physics. In these contexts, they encode rigidity and uniqueness phenomena and often determine the qualitative structure of solutions to evolution equations, particularly those governed by quadratic Hamiltonians.

The  Heisenberg-Weyl uncertainty principle  can be stated in the following directional version (cf. \cite{BDJ} and \cite[Corollary 2.6]{FS}):

\begin{theorem}[Heisenberg’s Inequality] Let $i = 1, \dots, d$ and $f \in L^2(\mathbb{R}^d)$. Then
	\begin{equation}\label{e00}
		\inf_{a \in \mathbb{R}} \int_{\mathbb{R}^d} (x_i - a)^2 |f(x)|^2 \,dx \cdot \inf_{b \in \mathbb{R}} \int_{\mathbb{R}^d} (\xi_i - b)^2 |\widehat{f}(\xi)|^2 \,d\xi \geq \frac{\|f\|^4_{2}}{16\pi^2}.
	\end{equation}
	Moreover, (1) is an equality if and only if $f$ is of the form
	\begin{equation}
		f(x) = \Theta(x_1, \dots, x_{i-1}, x_{i+1}, \dots, x_n) e^{-2i\pi b x_i} e^{-\gamma (x_i - a)^2},
	\end{equation}
	where $\Theta$ is a function in $L^2(\mathbb{R}^{d-1})$, $\gamma > 0$, and $a, b$ are real constants for which the two infimum in \eqref{e00} are realized.
\end{theorem}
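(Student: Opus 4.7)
The plan is to reduce to the one-dimensional, centered case ($a=b=0$) by a translation--modulation trick, apply the standard commutator-plus-Cauchy--Schwarz argument, and then unwind the reduction to characterize extremizers.

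First, I would observe that the infima are attained: expanding $\int(x_i-a)^2|f(x)|^2\,dx$ as a quadratic in $a$ shows it is minimized at $a_0=\|f\|_2^{-2}\int x_i|f|^2\,dx$, and similarly for $b_0$ on the Fourier side (both provided $f\neq 0$). Next, for arbitrary $a,b\in\mathbb{R}$, define $h(x):=e^{-2\pi i b x_i}f(x+a e_i)$. A direct change of variables gives $\|h\|_2=\|f\|_2$ and
\begin{equation}
\int_{\rd}x_i^2|h(x)|^2\,dx=\int_{\rd}(x_i-a)^2|f(x)|^2\,dx,
\end{equation}
while $\hat h(\xi)=e^{-2\pi i a(\xi_i+b)}\hat f(\xi+b e_i)$ yields the analogous Fourier identity. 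Hence it suffices to prove the inequality for $h$ in place of $f$ with $a=b=0$.

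Second, for the centered inequality, I would write the commutator identity coming from integration by parts in the $x_i$ variable:
\begin{equation}
\|h\|_2^2=-\int_{\rd}\partial_{x_i}(x_i)\,|h|^2\,dx=-2\,\Re\int_{\rd}x_i\,\overline{h(x)}\,\partial_{x_i}h(x)\,dx,
\end{equation}
valid first for Schwartz $h$ and extended by density (if the right-hand side below is infinite the stated inequality is trivial, so one may assume $x_ih,\partial_{x_i}h\in L^2$). Combining with Cauchy--Schwarz in $L^2(\mathbb{R}^d)$ gives
\begin{equation}
\tfrac14\|h\|_2^4\le\left(\int_{\rd}x_i^2|h|^2\,dx\right)\left(\int_{\rd}|\partial_{x_i}h|^2\,dx\right),
\end{equation}
and Parseval's formula $\int|\partial_{x_i}h|^2\,dx=4\pi^2\int\xi_i^2|\hat h|^2\,d\xi$ produces the desired bound. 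Taking infima over $a,b$ concludes the estimate \eqref{e00}.

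For the extremizers, equality in the Cauchy--Schwarz step forces $\partial_{x_i}h(x)=\lambda\,x_i\,h(x)$ for some $\lambda\in\mathbb{C}$ (interpreted distributionally in $x_i$ with the other variables as parameters), and equality in the commutator estimate $|\int x_i\bar h\,\partial_{x_i}h\,dx|=-\Re\int x_i\bar h\,\partial_{x_i}h\,dx$ then forces $\lambda=-2\gamma$ to be a negative real number. Solving the resulting ODE in $x_i$ with $L^2$ integrability yields
\begin{equation}
h(x)=\Theta(x_1,\dots,x_{i-1},x_{i+1},\dots,x_d)\,e^{-\gamma x_i^2},\qquad \gamma>0,\ \Theta\in L^2(\mathbb{R}^{d-1}).
\end{equation}
Inverting the modulation--translation from the first step and absorbing the resulting constant phase into $\Theta$ gives the stated form. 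Finally, a direct computation shows that for such $f$ the infima over $a,b$ are realized precisely at the quadratic-minimizers $a_0,b_0$ produced at the start, closing the characterization.

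The only delicate point is the density/approximation argument justifying the integration by parts (and the Cauchy--Schwarz step) without a priori Schwartz-type hypotheses on $f$; this is handled by the standard reduction that we may assume all three integrals in the inequality are finite, and then approximate by $\chi_R(x_i)h$ with smooth cutoffs, letting $R\to\infty$.
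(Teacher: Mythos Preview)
The paper does not actually prove this theorem: it is stated in the Introduction as a known result, with references to \cite{BDJ} and \cite[Corollary~2.6]{FS}, and is then used as a black box in the proofs of Theorems~\ref{dirHeis} and~\ref{Teo2}. So there is no ``paper's own proof'' to compare against.

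Your argument is the standard commutator/Cauchy--Schwarz proof and is essentially correct. One small slip: in the displayed commutator identity you write
\[
\|h\|_2^2=-\int_{\rd}\partial_{x_i}(x_i)\,|h|^2\,dx,
\]
but $\partial_{x_i}(x_i)=1$, so the middle expression is $-\|h\|_2^2$. The intended chain is
\[
\|h\|_2^2=\int_{\rd}\partial_{x_i}(x_i)\,|h|^2\,dx
=-\int_{\rd}x_i\,\partial_{x_i}\bigl(|h|^2\bigr)\,dx
=-2\,\Re\int_{\rd}x_i\,\overline{h}\,\partial_{x_i}h\,dx,
\]
and your final identity $\|h\|_2^2=-2\,\Re\int x_i\,\overline{h}\,\partial_{x_i}h$ is correct; only the intermediate sign is off. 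Also, the remark that the infima are attained should be placed after the reduction to the case where $x_ih,\ \partial_{x_i}h\in L^2$ (which you do make later), since otherwise the first moment $\int x_i|f|^2$ need not be finite. With these cosmetic fixes the proof goes through, and the extremizer analysis (Cauchy--Schwarz equality $\Rightarrow \partial_{x_i}h=\lambda x_ih$ with $\lambda$ a fixed negative real, hence $h(x)=\Theta(\hat x_i)e^{-\gamma x_i^2}$) is exactly the standard one.
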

The general $d$-dimensional case  was proved by Folland and Sitaram in  \cite[Theorem 1.1]{FS} and states that
\begin{equation}\label{e0}
	\left(\int_{\bR^d}|x-a|^2 |f(x)|^2dx\right)\left(\int_{\bR^d}|\xi-b|^2|\widehat f(\xi)|^2d\xi\right)\geq\frac{d^2\norm{f}_2^4}{16\pi^2},
\end{equation}
for every$f\in\lrd$ and  $a,b\in\bR^d$. 
Equality holds if and only if $f(x)=\Theta e^{2\pi ibx}e^{-\gamma (x - a)^2}$ for some $\Theta \in\bC$ and $\gamma>0$.
Observe that this case can be inferred from the directional inequality in \eqref{e00}.

These results already indicate that uncertainty may naturally be investigated direction by direction, a perspective that becomes essential when dealing with transforms that do not treat all spatial and frequency variables uniformly.

Another form of UP is  due to Buerling and then proved by H\"ormander \cite{H}:

For $f \in L^{1}(\mathbb{R})$, if
\[
\int_{\mathbb{R}}\int_{\mathbb{R}}
|f(x)|\,|\widehat{f}(\xi)|\,e^{2\pi |x\cdot \xi|}\,dx\,d\xi < \infty,
\]
then $f=0$.
Folland and Sitaram extended the result to the $d>1$ setting in their survey \cite{FS} and, finally,  Bonami, Demange, and Jaming \cite{H} weakened the assumptions so that non zero solutions given by
Hermite functions were also possible:
\begin{theorem}\label{BeurlingFT}
	Let $f\in L^2(\bR^d)$. Then,
	\begin{equation}\label{beurlingcondition}
		\int_{\bR^{2d}}\frac{|f(x)\widehat f(\eta)|}{(1+|x|+|\eta|)^N}e^{2\pi|x\cdot \eta |}dxd\eta<\infty
	\end{equation}
	if and only if
	\begin{equation}
		f(x)=q(x)e^{-\pi Mx\cdot x}, \qquad x\in\bR^d,
	\end{equation}
	where $q$ is a polynomial of degree $<\frac{N-d}{2}$ and $M$ is a real positive definite symmetric matrix. In particular, $f=0$ if $N\leq d$.
\end{theorem}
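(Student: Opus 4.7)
The plan is to establish the two directions separately. The ``if'' direction is a direct Gaussian integral estimate; the ``only if'' direction rests on analytic continuation combined with a Phragm\'en--Lindel\"of / Hadamard factorization argument.

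\textbf{Sufficiency.} Suppose $f(x)=q(x)e^{-\pi Mx\cdot x}$ with $M$ real positive definite symmetric and $\deg q<(N-d)/2$. The Fourier transform has the same polynomial-times-Gaussian form, $\widehat f(\eta)=\widetilde q(\eta)e^{-\pi M^{-1}\eta\cdot\eta}$ up to a multiplicative constant, with $\deg\widetilde q=\deg q$. Writing $x\cdot\eta=(M^{1/2}x)\cdot(M^{-1/2}\eta)$ and applying Cauchy--Schwarz followed by AM--GM yields $2|x\cdot\eta|\le Mx\cdot x+M^{-1}\eta\cdot\eta$, so the exponential factor in \eqref{beurlingcondition} is bounded by $1$. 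Finiteness of the integral then reduces to the convergence of $\int|q(x)||\widetilde q(\eta)|(1+|x|+|\eta|)^{-N}\,dx\,d\eta$ concentrated on the ``diagonal region'' where the exponential saturates; a standard polar estimate shows this is finite precisely when $2\deg q+d<N$. The restriction $N>d$ is needed even for $q\equiv 0$.

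\textbf{Necessity.} The heart of the proof is to recover the polynomial--Gaussian structure from \eqref{beurlingcondition}. By Fubini, for almost every $x$ in the support of $f$ the function $\eta\mapsto|\widehat f(\eta)|(1+|\eta|)^{-N}e^{2\pi|x\cdot\eta|}$ lies in $L^1(\bR^d)$. Letting $x$ vary over a positive-measure set spanning $\bR^d$ produces enough directional exponential decay of $\widehat f$ to extend it to an entire function on $\bC^d$ of order at most two, via its Fourier--Laplace continuation; symmetrically, $f$ extends holomorphically on $\bC^d$. Setting $F(z):=\widehat f(z)e^{\pi Mz\cdot z}$ for a symmetric matrix $M$ still to be identified, combining the growth of $\widehat f$ with the weighted integrability produces polynomial bounds $|F(x)|\lesssim(1+|x|)^N$ on $\bR^d$ and Gaussian bounds of order two off the real subspace.

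\textbf{The main obstacle: Phragm\'en--Lindel\"of and degree bounds.} The delicate step is to promote these one-sided estimates into the conclusion that $F$ is a \emph{polynomial} of degree strictly less than $(N-d)/2$. Following the strategy of Bonami--Demange--Jaming, one restricts $F$ to complex lines $z=\zeta v$, with $\zeta\in\bC$ and $v\in\bR^d$, and applies a one-variable Phragm\'en--Lindel\"of / Hadamard theorem: an entire function of order $\le 2$ with polynomial bound on the real axis and matching Gaussian bound on imaginary rays must be a polynomial. Gluing these one-dimensional polynomials through multivariate interpolation produces a global polynomial $F(z)=P(z)$ of the correct degree, so that $\widehat f(z)=P(z)e^{-\pi Mz\cdot z}$. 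Fourier inversion then yields $f(x)=q(x)e^{-\pi\widetilde Mx\cdot x}$ for the associated positive definite matrix $\widetilde M$, which is the claimed form, and the case $N\le d$ forces negative degree, hence $q\equiv 0$ and $f=0$. The most delicate points are the correct identification of the quadratic form $M$ from the integrability hypothesis and the several-variable gluing, both of which are technical but standard in the Bonami--Demange--Jaming argument.
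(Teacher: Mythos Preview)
The paper does not prove Theorem~\ref{BeurlingFT}; it is quoted in the introduction as a known result due to Bonami, Demange and Jaming (with the one-dimensional case attributed to H\"ormander), and is then used as a black box in the proof of Theorem~\ref{Thrm4.1}. There is therefore no ``paper's own proof'' to compare against.

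Your sketch follows the classical Bonami--Demange--Jaming strategy, which is the right reference. A couple of points where the write-up is looser than it should be: in the sufficiency direction, after bounding the full exponential factor by $1$ you cannot simply reduce to $\int|q(x)||\widetilde q(\eta)|(1+|x|+|\eta|)^{-N}\,dx\,d\eta$ over all of $\bR^{2d}$, since that integral diverges for the claimed degree range; you need to keep the residual Gaussian decay $e^{-\pi|M^{1/2}x\mp M^{-1/2}\eta|^2}$ away from the diagonal, and only then does the effective $d$-dimensional polar estimate give the threshold $2\deg q+d<N$. Your parenthetical about the ``diagonal region'' suggests you see this, but the sentence as written is misleading. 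In the necessity direction, the identification of the quadratic form $M$ is the genuinely delicate part of the BDJ argument and cannot be left as ``still to be identified''; likewise the passage from one-variable Phragm\'en--Lindel\"of on complex lines to a global polynomial requires the radial-integral lemma of BDJ rather than generic ``multivariate interpolation''. These are not errors in strategy, but the proposal as written is an outline rather than a proof.
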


Finally, we recall  Morgan's uncertainty principle.
An year after Hardy's result, Morgan generalized it as follows:
Let $f \in L^{1}(\mathbb{R}^{d})$ and assume that there exist constants $a,b>0$ and
exponents $p>2$ and $q>1$ such that
\[
\frac{1}{p}+\frac{1}{q}=1
\]
and
\[
\int_{\mathbb{R}^{d}} |f(x)|\,e^{a|x|^{p}}\,dx < \infty,
\qquad
\int_{\mathbb{R}^{d}} |\widehat f(\xi)|\,e^{b|\xi|^{q}}\,d\xi < \infty.
\]
If
\[
(ap)^{1/p}(bq)^{1/q} > 1,
\]
then
\[
f \equiv 0.
\]
Hardy's UP corresponds to the special case $p=q=2$, i.e. Gaussian decay.

In \cite{H} Bonami, Demange and Jaming generalized Morgan's UP as follows.
\begin{theorem}[Gel’fand-Shilov type UP]\label{classicMorgan}
	Let $1<p<2$ and let $q$ be the conjugate exponent. Assume that $f\in L^2(\rd)$ satisfies:\begin{equation*}
		\int_{\rd}|f(x)|e^{2\pi a^p/p|x_j|^p}dx<\i\quad and\quad \int_{\rd}|\widehat f(\xi)|e^{2\pi b^q/q|\xi_j|^q}d\xi<\i,
	\end{equation*}
	for some $j=1,\dots,d$ and for some $a,b>0$. \begin{itemize}
		\item [i)] If $ab>|\cos(p\pi/2)|^{1/p}$ then $f=0$.
		\item [ii)] If $ab<|\cos(p\pi/2)|^{1/p}$ there exists  a dense subset of functions in $\lrd$ which satisfy the above conditions for all $j=1,\dots,d.$
	\end{itemize}
\end{theorem}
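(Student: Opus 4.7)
The plan is to reduce to one dimension via partial Fourier transform and then handle the two parts separately: complex analysis for (i), explicit construction for (ii).

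\medskip
\textbf{Step 1: reduction to $d=1$.} Since both hypotheses involve only the $j$-th coordinate, I would apply the partial Fourier transform in the remaining variables $x'=(x_1,\dots,\widehat{x_j},\dots,x_d)$ to obtain $g(x_j,\xi'):=\int_{\bR^{d-1}} f(x_j,x')e^{-2\pi i x'\cdot \xi'}\,dx'$. The crude estimate $|g(x_j,\xi')|\leq \int_{\bR^{d-1}}|f(x_j,x')|\,dx'$ combined with Fubini transfers the first hypothesis on $f$ to the uniform bound $\int_{\bR}|g(x_j,\xi')|\,e^{2\pi a^p|x_j|^p/p}\,dx_j<\infty$ for every $\xi'\in\bR^{d-1}$, while $\widehat g(\xi_j,\xi')=\widehat f(\xi_j,\xi')$ allows the second hypothesis to be translated, for almost every $\xi'$, into $\int_{\bR}|\widehat g(\xi_j,\xi')|\,e^{2\pi b^q|\xi_j|^q/q}\,d\xi_j<\infty$. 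Thus it suffices to prove part (i) in dimension one and then conclude $g(\cdot,\xi')=0$ for a.e.\ $\xi'$.

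\medskip
\textbf{Step 2: part (i), complex-analytic argument in 1D.} Define the Fourier--Laplace transform $F(z)=\int_{\bR} f(x)e^{-2\pi i x z}\,dx$. Young's inequality $|x||\eta|\leq a^p|x|^p/p+|\eta|^q/(a^q q)$ applied to the first hypothesis gives the global estimate
\[
|F(\xi+i\eta)|\leq C\,e^{2\pi|\eta|^q/(a^q q)},\qquad \xi+i\eta\in\bC,
\]
so $F$ is entire of order $q$ and finite type. The second hypothesis, upgraded to a pointwise bound via mollification, yields $|F(\xi)|\leq C'\,e^{-2\pi b^q|\xi|^q/q}$ on the real axis. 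The Phragm\'en--Lindel\"of indicator theorem for entire functions of order $q$ then forces the indicator
\[
h(\theta):=\limsup_{r\to\infty} r^{-q}\log|F(re^{i\theta})|
\]
to be $q$-trigonometrically convex; propagating the decay from $\theta=0$ and the growth from $\theta=\pm\pi/2$ across sectors of opening $\pi/q<\pi/2$ (recall $q>2$), the resulting constraint becomes inconsistent precisely when $(ab)^q > |\cos(p\pi/2)|^{q/p}$, i.e.\ $ab>|\cos(p\pi/2)|^{1/p}$, forcing $F\equiv 0$ and hence $f=0$.

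\medskip
\textbf{Step 3: part (ii), density construction.} For $ab<|\cos(p\pi/2)|^{1/p}$ I would exhibit an explicit dense family. Natural candidates are tensor products of one-dimensional functions of Gelfand--Shilov type $S^{1/q}_{1/p}$, for instance suitable dilations and modulations of $\mathcal{F}^{-1}(e^{-\mu|\xi|^q})$ multiplied by Hermite-polynomial prefactors. The borderline identity $1/p+1/q=1$ is exactly the critical regime for which such spaces are non-trivial but subject to a sharp bound on the product of the two decay constants; the strict hypothesis $ab<|\cos(p\pi/2)|^{1/p}$ is precisely what allows the construction. Density in $\lrd$ then follows because the resulting family is total (its $L^2$-closure contains the Hermite basis) and tensorization guarantees that all $d$ directional conditions are satisfied simultaneously.

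\medskip
\textbf{Main obstacle.} The hard part is extracting the \emph{sharp} constant $|\cos(p\pi/2)|^{1/p}$ from the Phragm\'en--Lindel\"of step: a soft application of the principle yields only a non-sharp threshold, and the exact trigonometric factor requires the full indicator-function machinery together with a carefully chosen auxiliary entire function on sectors of opening $\pi/q$. A secondary technical point is passing from the integral condition on $\widehat f$ to a pointwise estimate on the real axis without degrading the constant.
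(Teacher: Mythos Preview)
The paper does not prove this statement. Theorem~\ref{classicMorgan} is stated in the Introduction as a known result, attributed to Bonami, Demange and Jaming, and is then used as a black box in the proof of Theorem~\ref{5.1} (the metaplectic extension of Morgan's UP). There is therefore no proof in the paper to compare your proposal against.

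That said, your outline is essentially the classical route to this result: the reduction to $d=1$ via partial Fourier transform in the remaining variables is standard, and the Phragm\'en--Lindel\"of/indicator-function argument in Step~2 is precisely the mechanism used in the original literature (Morgan, and later Bonami--Demange--Jaming for the sharp threshold and the characterization). You correctly flag the real difficulty, namely extracting the sharp constant $|\cos(p\pi/2)|^{1/p}$ from the indicator machinery rather than a soft Phragm\'en--Lindel\"of bound; this is indeed where the work lies, and your sketch does not actually carry it out. Likewise, the passage from an $L^1$-weighted bound on $\widehat f$ to a pointwise decay estimate without loss of constant is delicate and is typically handled via convolution with compactly supported cutoffs (as in Bonami--Demange--Jaming), which your ``mollification'' remark gestures at but does not resolve. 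For part~(ii) the Gel'fand--Shilov construction is the right idea, though again the sharp threshold requires an explicit one-dimensional building block whose existence is the content of the subcritical case.
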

These results reveal a sharp threshold phenomenon and show that Gaussian-type behavior plays a distinguished role in uncertainty principles with exponential weights.

A natural and unifying framework for many of these results is provided by the metaplectic representation. Up to a phase factor, this representation associates to each symplectic matrix
\[
S=\begin{pmatrix} A & B \\ C & D \end{pmatrix} \in \mathrm{Sp}(d,\mathbb{R}), \qquad A,\, B,\, C,\, D\in\bR^{d\times d}
\]
a unitary operator \(\widehat S\) (metaplectic operator) acting on $L^2(\mathbb{R}^d)$. This class includes the Fourier transform, fractional and partial Fourier transforms, quadratic phase multipliers, and propagators of Schr\"{o}dinger equations with quadratic Hamiltonians; see \cite{Gos11} and Section~2 below. In this setting, classical uncertainty principles can be interpreted as statements on the joint localization of a function $f$ and its metaplectic image $\widehat{S}f$. We say that $S$ is {\em free} if $B\in\GL(d,\bR)$, in which case $\widehat S$ is called {\em quadratic Fourier transform}.

In recent years, there has been growing interest in uncertainty principles adapted to metaplectic and time-frequency representations. For free metaplectic operators, namely those associated with symplectic matrices whose $B$-block is invertible, several analogues of classical Heisenberg, Hardy, Beurling, and Morgan-type uncertainty principles have been established; see for instance \cite{CGP2024,Liu,Z1,ZH}. In this regime, all spatial and frequency directions are involved, and the uncertainty phenomenon retains an essentially isotropic character.

More recently, Gröchenig and Shafkulovska initiated a systematic study of uncertainty principles for metaplectic time-frequency representations, beyond the free case. In \cite{GrochenigShafkulovska2025}, the authors proved a Benedicks-type uncertainty principle for metaplectic representations, highlighting new rigidity phenomena and showing how the geometry of the underlying symplectic transformation influences phase-space localization. This line of research was further developed in the recent preprint \cite{ShafkulovskaGrochenig2025b}, where additional uncertainty principles of Heisenberg and Beurling type are established in the metaplectic time-frequency setting.

The presence of degeneracies in the symplectic matrix, and in particular of a nontrivial kernel of the block $B$, leads to a fundamentally different picture. In such cases, the metaplectic operator does not mix all spatial and frequency variables uniformly, and uncertainty phenomena become intrinsically directional. Certain directions in phase space exhibit genuine uncertainty constraints, while others remain essentially free. 
This aspect is overshadowed in \cite{ShafkulovskaGrochenig2025b}, where, for instance, strict global decay or integrability assumptions are imposed in order to obtain uncertainty principles. In fact, it is generally sufficient to require decay or integrability only along the specific directions determined by $\ker(B)^\perp$. This is the approached followed in \cite{HUP}.

The main novelty of the present paper is the extension of classical uncertainty principles to arbitrary metaplectic operators, including degenerate cases in which $\ker B \neq \{0\}$, by means of the technique introduced in \cite{CGP2024} and generalized in \cite{HUP}. Our approach relies on a careful geometric decomposition of phase space adapted to $\ker B$ and its orthogonal complement. As a consequence, uncertainty is confined to an effective phase-space dimension determined by $\mathrm{rank}(B)$, while the remaining directions exhibit free behavior.

More precisely, for metaplectic operators with $\mathrm{rank}(B)=r$, $1 \le r \le d$, we establish sharp anisotropic Heisenberg-Pauli-Weyl type inequalities involving only the $r$ effective directions of uncertainty, with explicit constants expressed in terms of geometric quantities associated with the underlying symplectic matrix. We also provide a complete characterization of extremizers, which turn out to be partially Gaussian functions with free behavior along the null directions of $B$.

Building on this framework, we prove a metaplectic version of the Beurling-Hörmander
theorem \ref{BeurlingFT}, extending the classical characterization of functions satisfying exponential
integrability conditions involving both \(f\) and its metaplectic transform \(\widehat S f\), cf. Theorem \ref{Thrm4.1} below.
In the free case  our result recovers the classical theorem for the Fourier transform,
while in the general case it yields a precise polynomial-Gaussian structure along the
effective directions.

Finally, we generalize Theorem  \ref{classicMorgan} and  establish a Morgan-type (or Gel'fand-Shilov type) UP for
metaplectic operators, see Theorem \ref{5.1} below.
As in the classical theory, we identify a sharp threshold separating triviality from
density of admissible functions, and show that this threshold is invariant under
metaplectic transformations.

These results recover the classical Fourier case and the quadratic Fourier transforms as special instances, and reveal the geometric and anisotropic nature of uncertainty principles in the presence of symplectic degeneracies.

Further connections with evolution equations and dispersive phenomena arise in the study of Schr\"odinger propagators with quadratic Hamiltonians, see the Examples \ref{ex4.8} and \ref{ex4.10} below for the cases of the free particle and the quantum harmonic oscillator. In this direction, uncertainty principles have been shown to encode strong rigidity and uniqueness properties for solutions of dispersive PDEs; see, for instance, \cite{CF2015,CF2017,HUP}. These results emphasize the deep interplay between symplectic geometry, metaplectic operators, and time-frequency analysis.

From a broader perspective, the conceptual and geometric significance of the uncertainty principle in harmonic analysis and mathematical physics has been highlighted in \cite{WW21}, while recent advances on subcritical and refined uncertainty principles are discussed in \cite{ST2024}. Finally, uncertainty principles for generalized Fourier-type and free metaplectic transforms have been actively investigated in recent years; see \cite{Liu,Z1,ZH} and the references therein. To sum up, the present work complements and extends these contributions by providing a unified and geometrically transparent treatment of anisotropic uncertainty principles for arbitrary metaplectic operators, including degenerate cases.

\noindent
\textbf{Structure of the paper.}
In Section~2 we collect preliminary material on symplectic geometry, metaplectic operators,
and geometric volume factors.
Section~3 is devoted to anisotropic Heisenberg-type inequalities, with sharp constants
and complete characterization of extremizers.
In Section~4 we prove a metaplectic version of Beurling's theorem 
 and, finally,  a Morgan-type uncertainty principle.
 
Several examples, including the Fourier transform, partial Fourier transforms, and
quadratic phase multipliers, are discussed throughout the paper to illustrate the results.


\section{Preliminaries}
For $x,y\in\rd$, we denote by $x\cdot y$ the standard inner product of $\rd$. We denote by $\cS(\rd)$ the Schwartz class of rapidly decreasing functions. Its topological dual $\cS'(\rd)$ is the space of tempered distributions.

We use the notation in \cite{MO2002}. For an $d \times d$ matrix $A$ and a linear subspace $L$ of $\mathbb{R}^d$ with $\dim L = \ell$, $q_L(A)$ denotes the $\ell$-dimensional volume of 
\[
X = \left\{ x \in \mathbb{R}^d \;\middle|\; x = \alpha_1 A e_1 + \cdots + \alpha_\ell A e_\ell,\ 0 \leq \alpha_i \leq 1,\ i = 1, \dots, \ell \right\}
\]
spanned by the vectors $Ae_1, \dots, Ae_\ell$, with $e_1, \dots, e_\ell$  any orthonormal basis of $L$.

If $\dim A(L) = \dim L = \ell$, then the $\ell$-dimensional volume of $X$ is positive; otherwise, this volume is zero. The number $q_L(A)$ can be associated with a matrix volume as follows: we collect the vectors $e_1, \dots, e_\ell$ as columns into the $d \times \ell$ matrix $E$ ($E = [e_1, \dots, e_\ell]$). Assuming that $\dim A(L) = \ell$, the matrix $AE$ has full column rank and
\begin{equation}\label{qL}
q_L(A) = \mathrm{vol}(AE) = \sqrt{\det(E^T A^T A E)}.
\end{equation}
If $\ell=0$ we set $q_L(A):=1$.
Observe that, if $L = \mathbb{R}^d$ and $A$ is nonsingular, then  $q_L(A) = |\det A|$.

 The following relation holds for $\varphi\in \mathcal{S}(\mathbb{R}^d) \subset L^2(\mathbb{R}^d)$:
\begin{equation}\label{CV}
\int_L \varphi(Ax)\, dx = \frac{1}{q_L(A)} \int_{A(L)} \varphi(x)\, dx \quad (\dim A(L) = \dim L).
\end{equation}

\subsection{The symplectic group}
Let
\begin{equation}\label{defJ}
    J=\begin{pmatrix}
        0_d & I_d\\
        -I_d & 0_d
    \end{pmatrix}
\end{equation}
be the standard symplectic form of $\rdd$. A matrix
\begin{equation}\label{blockS}
	S=\begin{pmatrix}
		A & B\\
		C & D
	\end{pmatrix}\in\bR^{2d\times2d}
\end{equation}
is {\em symplectic} if $S^T JS^T=J$. Alternatively, 
\begin{align}
    & A^T C=C^T A,\\
    & B^T D=D^T B,\\
    \label{SpRel3}
    & A^T D-C^T B=I_d.
\end{align}
The symplectic group $\Sp(d,\bR)$ is generated by $J$ in \eqref{defJ} and by the subgroups
\begin{equation}\label{defVQ}
    V_Q=\begin{pmatrix}
        I_d & 0_d\\
        Q & I_d
    \end{pmatrix}, \qquad Q\in\bR^{d\times d}, \, Q=Q^T
\end{equation}
and
\begin{equation}\label{defDE}
    \cD_E=\begin{pmatrix}
        E^{-1} & 0_d\\
        0_d & E^T
    \end{pmatrix}, \qquad E\in\GL(d,\bR).
\end{equation}
We set
\begin{equation}\label{muS}
	\mu_S=\sqrt{\frac{1}{q_{R(B)^\perp}(A^T)\sigma(B)}},
\end{equation}
where $\sigma(B)$ is the product of the non-zero singular values of $B$, whereas $q_{R(B)^\perp}(A^T)$ is the volume of the simplex generated by $A^Tv_1,\ldots,A^Tv_{d-r}$, being $v_1,\ldots,v_{d-r}$ an orthonormal basis of $R(B)^\perp$. 

Moreover we introduce the constant:\begin{equation}\label{definitionKs}
    K_S:=\frac{\sqrt{q_{R(B)^\perp}(AA^T)}}{q_{R(B)^\perp}(A^T)\sqrt{q_{\ker B}(D^TA)}}
\end{equation}

\begin{remark}
    The factors appearing in \eqref{definitionKs} cannot be reasonably simplified further. To see it, consider the symplectic matrix
    \begin{equation}
        S=V_QV_P^T=\begin{pmatrix}
            I_d  & P\\
            Q & I_d+QP
        \end{pmatrix}.
    \end{equation}
    The symmetric matrices $P$ and $Q$ can be chosen independently. This tells that it is not possible to express the action of $D^T A=I_d+PQ$ on $\ker(P)$ in full-generality. By considering $V_P^T V_Q$ instead, we see that it is not even possible to express in full-generality the action of $AA^T$, which in that case coincides with $I_d+QP+PQ+PQ^2P$ on $R(B)^\perp$, in this case coinciding with $\ker(P)$.
\end{remark}

Observe that $A^T: R(B)^\perp	\to \ker B$ is an isomorphism, as recalled below.

\begin{lemma}\label{ele1} Consider the symplectic matrix with block decomposition in \eqref{blockS}. Then\\
	(i) $D^TA:\ker(B)\to D^TA(\ker(B))$ is an isomorphism. \\
	(ii) $D^T: A(\ker(B))\to D^TA(\ker(B))$ is an isomorphism. \\
	(iii) $A^T: R(B)^\perp	\to \ker B$ is an isomorphism.
\end{lemma}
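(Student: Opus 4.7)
The plan is to combine dimension counting with the standard symplectic identities. Since $S\in\Sp(d,\bR)$, both $S^TJS=J$ and $SJS^T=J$ hold, which yields, beyond the relations $A^TC=C^TA$, $B^TD=D^TB$ and \eqref{SpRel3}, also the twin identities $AB^T=BA^T$ and $DA^T-CB^T=I_d$, the latter being equivalent (by transposition) to
\begin{equation*}
D^TA - B^TC = I_d.
\end{equation*}
I would also record the elementary dimension identity $\dim R(B)^\perp = d - \mathrm{rank}(B) = \dim\ker(B)$, which will let injectivity and surjectivity be traded against each other.

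I would prove (iii) first, since it is the geometric core. For $v\in R(B)^\perp = \ker(B^T)$, the identity $BA^T=AB^T$ gives $B(A^T v)=A(B^T v)=0$, so $A^T v\in\ker(B)$ and the map is well defined. For surjectivity, pick $u\in\ker(B)$ and apply $A^TD-C^TB=I_d$ to $u$: since $Bu=0$, one obtains $A^T(Du)=u$. Moreover $B^T(Du)=D^T(Bu)=0$, using $B^TD=D^TB$, hence $Du\in\ker(B^T)=R(B)^\perp$ provides the required preimage. The equality of dimensions above promotes surjectivity to bijectivity.

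For (ii), surjectivity holds by construction of the target. For injectivity, let $u=Aw$ with $w\in\ker(B)$ and $D^T u=0$. Applying $D^TA-B^TC=I_d$ to $w$ yields
\begin{equation*}
w = D^TAw - B^TCw = -B^TCw \in R(B^T) = \ker(B)^\perp,
\end{equation*}
and since simultaneously $w\in\ker(B)$, we conclude $w=0$, hence $u=0$. Part (i) is then essentially free: surjectivity is tautological and, if $w\in\ker(B)$ satisfies $D^TAw=0$, the very same computation shows $w\in R(B^T)\cap\ker(B)=\{0\}$.

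The argument is essentially bookkeeping with the symplectic relations; the main thing to notice is that the transposed identity $D^TA-B^TC=I_d$, rather than \eqref{SpRel3} itself, is the right tool to control the null-direction subspace, while \eqref{SpRel3} is what delivers surjectivity in (iii) by producing $Du$ as an explicit preimage. No real obstacle arises beyond choosing the correct identity at each step.
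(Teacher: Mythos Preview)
Your argument is correct and more self-contained than the paper's. The paper defers (i) and (iii) to an external reference and then deduces (ii) from (i) by a short contradiction, whereas you prove all three parts directly from the symplectic block identities, making explicit which relation does the work at each step. Your route has the advantage that the reader need not consult another source, and the surjectivity argument in (iii) via the explicit preimage $Du$ is a clean touch. One cosmetic slip: the identity $D^TA-B^TC=I_d$ is not obtained by transposing $DA^T-CB^T=I_d$ (that transpose is $AD^T-BC^T=I_d$); rather, it is simply the transpose of \eqref{SpRel3} itself. The identity is valid either way, so the proof stands.
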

\begin{proof}
Item $(i)$  is shown in Subsection 4.3 of \cite{HUP}. For item $(ii)$ it is enough to show that  $D^T: A(\ker(B))\to D^TA(\ker(B))$ is injective. By contradiction, assume there exist $x_1\not=x_2$ such that $D^Tx_1=D^Tx_2$. Then there exist $y_1,y_2\in\ker(B)$ such that $Ay_1=x_1\not=x_2=Ay_2$. By Item $(i)$
$D^TAy_1\not=D^TAy_2$, which is a contradiction. Item $(iii)$ is contained in Corollary B.2 and Table B.1 of \cite{HUP}.
\end{proof}

A symplectic matrix $S$ with blocks \eqref{blockS} is orthogonal if and only if $C=-B$ and $D=A$. In this case, equation \eqref{SpRel3} reads as $A^T A+B^T B=I_d$, whence $A^T A x=x$ for every $x\in\ker(B)$. The same argument for $S^{-1}=S^T$ yields $AA^T x=x$ for every $x\in\ker(B^T)=R(B)^\perp$. In conclusion:
\begin{lemma}\label{lemmaOrthogonal}
    Let $S\in\Sp(d,\bR)$ be orthogonal with blocks \eqref{blockS}. Then, $A^T A|_{\ker(B)}=\mathrm{id}$ and $AA^T|_{R(B)^\perp}=\mathrm{id}$.
\end{lemma}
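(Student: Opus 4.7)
The plan is to translate the orthogonality constraint into two algebraic identities between the blocks of $S$, and then restrict those identities to the relevant subspaces $\ker(B)$ and $R(B)^\perp$.

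First, I would record (as done in the preceding paragraph of the excerpt) that a symplectic matrix with block decomposition \eqref{blockS} is orthogonal precisely when $C=-B$ and $D=A$; this follows immediately from combining $S^T S=I_{2d}$ with $S^T J S = J$ and is the standard intertwining of $\Sp(d,\bR)\cap O(2d)$ with the unitary group $U(d)$. Substituting $D=A$ and $C=-B$ into the relation $A^T D - C^T B = I_d$ from \eqref{SpRel3} gives
\begin{equation}
A^T A + B^T B = I_d.
\end{equation}

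For the first assertion, I would take $x\in\ker(B)$: then $B^T B x = 0$, and the identity above forces $A^T A x = x$, which proves $A^T A|_{\ker(B)} = \mathrm{id}$.

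For the second assertion, the cleanest approach is to apply exactly the same reasoning to $S^T$. Since $\Sp(d,\bR)$ is closed under transposition and $S$ is orthogonal, $S^T=S^{-1}$ is again symplectic and orthogonal, with block decomposition $\begin{pmatrix} A^T & -B^T\\ B^T & A^T\end{pmatrix}$. Running the previous argument for $S^T$ yields $A A^T + B B^T = I_d$, and then for $y\in R(B)^\perp = \ker(B^T)$ one has $B B^T y = 0$, hence $A A^T y = y$. I do not foresee any real obstacle here: the only point worth stating explicitly is $R(B)^\perp = \ker(B^T)$, which is the usual orthogonal decomposition of $\bR^d$ induced by $B$.
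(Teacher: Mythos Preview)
Your proof is correct and follows essentially the same approach as the paper: derive $A^TA+B^TB=I_d$ from \eqref{SpRel3} under the orthogonality constraints $C=-B$, $D=A$, restrict to $\ker(B)$, and then repeat the argument for $S^T=S^{-1}$ to obtain $AA^T+BB^T=I_d$ and restrict to $\ker(B^T)=R(B)^\perp$. The paper's argument is in fact given in the paragraph immediately preceding the lemma, and your write-up matches it step for step.
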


The constant $K_S$ encodes the geometrical interactions between the blocks of $S$ that are listed in Lemma \ref{ele1}. When $S$ is orthogonal, by Lemma \ref{lemmaOrthogonal}, this constant simplifies considerably:
\begin{equation}
    K_S=\frac{1}{q_{R(B)^\perp}(A^T)}.
\end{equation}

\subsection{The metaplectic group}
Let us define metaplectic operators as in \cite{MO2002}. For standard presentations, we refer to \cite{corderobook,deGosSGQM,deGosQHM}. If $S\in\Sp(d,\bR)$ has blocks as in \eqref{blockS}, we may consider the integral operator
\begin{equation}\label{defS1}
    \widehat Sf(x)=\mu_Se^{i\pi DC^T x\cdot x}\int_{\ker(B)^\perp}f(y+D^T x)e^{i\pi B^+Ay\cdot y}e^{2\pi iC^T x\cdot y}dy,
\end{equation}
initially defined for $f\in\cS(\rd)$. If $B=0_d$, the expression above must be interpreted as
\begin{equation}\label{defS2}
    \widehat Sf(x)=\det(A)^{-1/2}e^{i\pi CA^{-1}x\cdot x}f(A^{-1}x).
\end{equation}
Notice the ambiguity of sign due to the square-root. 
An operator in the form \eqref{defS1} or \eqref{defS2} is called {\em metaplectic}. Metaplectic operators map continuously $\cS(\rd)$ to itself, and they are homeomorphisms. They extend to unitary operators on $L^2(\rd)$ and to homeomorphisms of $\cS'(\rd)$, by setting
\begin{equation}
    \la\widehat Sf,g\ra =\l f,\widehat S^{-1}g\r, \qquad f,g\in \cS(\rd).
\end{equation}
The metaplectic operator associated to $S\in\Sp(d,\bR)$ is defined up to a phase factor, i.e. an unimodular complex constant. The group $\{\widehat S:S\in\Sp(d,\bR)\}$ has a subgroup consisting of precisely two operators for each $S\in\Sp(d,\bR)$. This subgroup is the {\em metaplectic group} and it is denoted by $\Mp(d,\bR)$. The projection $\pi^{Mp}:\widehat S\in\Mp(d,\bR)\mapsto S\in\Sp(d,\bR)$ is a group homomorphism with kernel $\{\pm\mathrm{id}\}$. 

\begin{example}\label{exMetap}
	(a) The Fourier transform $\cF$, defined for every $f\in \cS(\rd)$ by
	\begin{equation}
		\cF f(\xi)=\widehat f(\xi)=\int_{\rd}f(x)e^{-2\pi i\xi\cdot x}dx, \qquad \xi\in\rd,
	\end{equation}
	is a metaplectic operator. Its projection is $\pi^{Mp}(\cF)=J$, defined in \eqref{defJ}.\\
	(b) If $Q\in\Sym(d,\bR)$,  \begin{equation}\label{defPhi}
			\Phi_Q(t)=e^{i\pi Qt\cdot t}
		\end{equation}
  is the corresponding chirp, and the operator
	\begin{equation}
		\mathfrak{p}_Qf(t)=\Phi_Q(t)f(t), \qquad f\in L^2(\rd),
	\end{equation}
	is metaplectic, with projection $\pi^{Mp}(\mathfrak{p}_Q)=V_Q$, defined in \eqref{defVQ}. \\
	(c) If $E\in\GL(d,\bR)$, the rescaling operator:
	\begin{equation}
		\mathfrak{T}_Ef(t)=|\det(E)|^{1/2}f(Et), \qquad f\in L^2(\rd),
	\end{equation}
	is metaplectic, with projection $\pi^{Mp}(\mathfrak{T}_E)=\cD_E$, defined in \eqref{defDE}.\\
	(d) If $P\in\Sym(d,\bR)$, the multiplier operator:
	\begin{equation}
		\mathfrak{m}_Pf=\cF^{-1}(\Phi_{-P} \widehat f), \qquad f\in L^2(\rd)
	\end{equation}
	is metaplectic, with $\pi^{Mp}(\mathfrak{m}_P)=V_{P}^T$, see \eqref{defVQ}.
\end{example}

\subsection{Anisotropic tensor products}
We shall use the following issues.

\begin{lemma}\label{lemma2.3}
    Let $V_1,V_2$ be non-trivial linear subspaces of $\rd$ such that:\begin{equation*}
        \rd=V_1 \oplus V_2.
    \end{equation*}
    Then, the subset:\begin{equation}\label{eq:setD}
        \mathfrak{D}:=\mathrm{span}\{f=hg\in L^2(\rd)|\ h\in L^2(V_1),\ g\in L^2(V_2)\},
    \end{equation}
    is a dense linear subspace of $L^2(\rd)$.
\end{lemma}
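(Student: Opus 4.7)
The plan is to reduce the statement to the classical density of finite sums of simple tensors in $L^2$ of a product space. First, I would fix any basis $\{e_1,\dots,e_{k}\}$ of $V_1$ and any basis $\{e_{k+1},\dots,e_d\}$ of $V_2$; the direct-sum hypothesis $\rd = V_1 \oplus V_2$ ensures that their union is a basis of $\rd$. The sum map
\begin{equation*}
T: V_1\times V_2 \longrightarrow \rd, \qquad T(v_1,v_2) := v_1+v_2,
\end{equation*}
is therefore a linear isomorphism with a nonzero constant Jacobian $\delta>0$. By the change of variables formula, the pullback
\begin{equation*}
\Psi: L^2(V_1\times V_2) \longrightarrow L^2(\rd), \qquad (\Psi F)(x) := \delta^{-1/2} F(T^{-1}x),
\end{equation*}
is a unitary isomorphism. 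Denoting by $P_1,P_2$ the projections onto $V_1,V_2$ associated with the decomposition, one has $T^{-1}x = (P_1x, P_2x)$.

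Next, a simple tensor $F(v_1,v_2) = h(v_1)g(v_2)$, with $h\in L^2(V_1)$ and $g\in L^2(V_2)$, is mapped by $\Psi$ to
\begin{equation*}
(\Psi F)(x) = \delta^{-1/2}\, h(P_1 x)\, g(P_2 x),
\end{equation*}
which coincides, up to the harmless constant $\delta^{-1/2}$ (absorbable into $h$ or $g$), with the product $hg$ featuring in the definition of $\mathfrak{D}$, once elements of $L^2(V_i)$ are interpreted on $\rd$ via the respective projections. Consequently, $\Psi$ sends the algebraic span of simple tensors inside $L^2(V_1\times V_2)$ exactly onto $\mathfrak{D}$.

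Finally, I would invoke the classical fact that finite linear combinations of simple tensors are dense in $L^2(V_1\times V_2)$: any element can be approximated in $L^2$-norm by simple functions supported on finite unions of product rectangles $E_1\times E_2$ of finite measure, and each characteristic function $\chi_{E_1\times E_2}$ is itself the simple tensor $\chi_{E_1}(v_1)\chi_{E_2}(v_2)$. Since $\Psi$ is a unitary isomorphism, it preserves density, and therefore $\mathfrak{D}$ is dense in $L^2(\rd)$. There is no substantial obstacle in this argument; the only point worth checking with care is the identification of the pointwise product $hg$ on $\rd$ (interpreted via $x = P_1 x + P_2 x$) with a genuine tensor product on $V_1\times V_2$, which is immediate from the uniqueness of the direct-sum decomposition.
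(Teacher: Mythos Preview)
Your proposal is correct and takes a genuinely different, more abstract route than the paper. You reduce the question to the classical fact that the algebraic tensor product $L^2(V_1)\otimes L^2(V_2)$ is dense in $L^2(V_1\times V_2)$, and transport this density to $L^2(\rd)$ via the unitary pullback along the linear isomorphism $T(v_1,v_2)=v_1+v_2$. The paper instead carries out the approximation by hand: it fixes orthonormal bases of $V_1$ and $V_2$, introduces ``multiparallelograms'' (which are precisely the $T$-images of coordinate rectangles in $V_1\times V_2$), observes that their indicator functions lie in $\mathfrak{D}$, and then approximates an arbitrary $f\in L^2(\rd)$ first by a $C_c$ function and then, via uniform continuity, by a step function on a fine grid of multiparallelograms. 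In effect the paper reproves, in this specific setting, the tensor-product density that you quote as a black box. Your argument is shorter and more conceptual, while the paper's is fully self-contained; the only care needed in your version is to note that $V_1$ and $V_2$, being finite-dimensional, are $\sigma$-finite measure spaces, so the standard density of simple tensors in $L^2$ of the product applies without qualification.
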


\begin{proof}
    Since $V_1$ and $V_2$ are non-trivial, $\dim V_1=k>0$, $\dim V_2=d-k>0$. Consider $\{u_1,\dots,u_k\}$, $\{v_1,\dots,v_{d-k}\}$ to be orthonormal bases of $V_1$ and $V_2$, respectively. We call \textit{open multiparallelogram} a subset of $\rd$ of the form:\begin{equation}\label{multiparall}
        R=\left\{x=x_1+x_2\in V_1\oplus V_2|\ a_i< x_1\cdot u_i<b_i,\ a_{j+k}< x_2\cdot v_j<b_{j+k}\right\},
    \end{equation}
    for some $a_i,b_{j+k}\in\bR,$ where $i=1,\dots,k$, $j=1,\dots,d-k$.
    Define:
    \begin{equation*}
        \phi(x_1):=\prod_{i=1}^k \chi_{(a_i,b_i)}( x_1\cdot u_i),\ \forall x_1\in V_1,\quad \psi(x_2):=\prod_{J=1}^{d-k} \chi_{(a_{j+k},b_{j+k})}(x_2\cdot v_j), \quad \forall x_2\in V_2,
    \end{equation*}
    then $\phi\in L^2(V_1)$, $\psi\in L^2(V_2)$,  and\begin{equation*}
        \chi_R=\phi(x_1)\psi(x_2)\in \mathfrak{D},
    \end{equation*}
    defined in \eqref{eq:setD}. As a consequence, the set
    \begin{equation*}
        \E:=\left\{\displaystyle\sum_{i=1}^N \alpha_i \chi_{R_i}|\ n\in\bN,\ \alpha_1,\dots,\alpha_N\in\bR,\ R_1,\dots,R_N\ open\ multiparallelograms\ in\ \rd\right\},
    \end{equation*}
is a subset of $\mathfrak{D}$. 

Fix any $f\in L^2(\rd)$ and $\eps>0$. Since $C_c(\rd)$ is dense in $L^2(\rd)$, there exists $g\in C_c(\rd)$ such that $\norm{f-g}_2<\eps/2$. Now, $\supp g \subset\rd$ is compact, then $\supp g\cap V_1$ and $\supp g\cap V_2$ are bounded, so there exist $ \rho_1,\rho_2>0$ s.t., for every $x=x_1+x_2\in \supp g$, $|x_j|<\rho_j$, $j=1,2$, and, in particular, $|x_1\cdot u_i|<\rho_1\ \forall i=1,\dots,k$ and $| x_2\cdot v_i |<\rho_2\ \forall i=k+1,\dots,d$. Therefore, if we chose $\rho=\max\{\rho_1,\rho_2\}$, then
$\ \supp g\subseteq \overline{R_{\rho}}$, where $R_{\rho}$ is of the form \eqref{multiparall} with $a_1=\dots=a_d=-\rho$ and $b_1=\dots=b_d=\rho$. Moreover, by the Heine-Cantor theorem, $g$ is uniformly continuous on $\overline{R_{\rho}}$, so there exists $\delta>0$ $s.t.$, \begin{equation*}
    |x-y|<\delta\implies |g(x)-g(y)|<\eps/c,
\end{equation*}
where $c>0$ is a parameter properly chosen later. Since $R_{\rho}$ is bounded, there exists a number $k>0$ (sufficiently large) of open multiparallelograms $R_1,\dots, R_k$ such that:\begin{equation*}
    R_{\rho}=\bigcup_{i=1}^k R_i,\quad \mathrm{diam}(R_i)<\delta,\quad \forall i=1,\dots,k.
\end{equation*}
For every $i=1,\dots,k$, we chose an arbitrary point $x_i\in R_i$ and we define the function:\begin{equation*}
    h(x)=\displaystyle\sum_{i=1}^kg(x_i)\chi_{R_i}(x)\in\E.
\end{equation*}
So,\begin{align*}
    \int_{\rd}|g(x)-h(x)|^2dx=&\int_{R_{\rho}}|g(x)-h(x)|^2dx\leq \displaystyle\sum_{i=1}^k\int_{R_i}|g(x)-h(x)|^2dx \\
    &\leq \displaystyle\sum_{i=1}^k\int_{R_i}|g(x)-g(x_i)|^2dx <\displaystyle\sum_{i=1}^k\int_{R_i}\frac{\eps^2}{c^2} dx   =\left(\displaystyle\sum_{i=1}^k|R_i|\right)\frac{\eps^2}{c^2}.
\end{align*}
Where $|R_i|$ denotes the Lebesgue measure of the set $R_i$. Choosing $c=\left(2\displaystyle\sum_{i=1}^k|R_i|\right)^{\frac{1}{2}}$, we obtain  $\norm{g-h}_2<\eps/2$.
In conclusion, by triangular inequality:\begin{equation*}
    \norm{f-h}_2\leq \norm{f-g}_2+\norm{g-h}_2<\eps.
\end{equation*} Thus, for any $f\in L^2(\rd)$ and $\eps>0$, there exists $h\in\E\subset \mathfrak{D}$ with $ \norm{f-h}_2<\eps$. This concludes the proof.
\end{proof}

In particular, we will use  the following Corollary:
\begin{corollary}\label{densita}
     Let $V_1,V_2$ be non-trivial linear subspaces of $\rd$ such that:\begin{equation*}
        \rd=V_1 \oplus V_2.
    \end{equation*}
    Let $H_1,H_2$ be two dense subsets of $L^2(V_1)$ and $L^2(V_2)$, respectively. Then, the subset\begin{equation*}
         \mathfrak{D}:=\mathrm{span} \{f=hg\in L^2(\rd)|\ h\in H_1,\ g\in H_2\},
    \end{equation*}
    is dense in $L^2(\rd)$.
\end{corollary}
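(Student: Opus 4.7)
The natural strategy is to combine Lemma \ref{lemma2.3} with the continuity of the product map $(h,g)\mapsto hg$ in the $L^2$-topology, so that the finer dense sets $H_1,H_2$ can replace the full spaces $L^2(V_1), L^2(V_2)$ inside the linear span. First I would set up the bilinear estimate. Writing every $x\in\rd$ uniquely as $x=\pi_1 x+\pi_2 x$ with $\pi_i x\in V_i$ determined by the direct sum decomposition, one interprets the product in \eqref{eq:setD} as $(hg)(x)=h(\pi_1 x)g(\pi_2 x)$. Fixing orthonormal bases of $V_1$ and $V_2$ and assembling them as columns of an invertible matrix $E:\rd\to\rd$, a standard change of variables $x=Ey$ yields the existence of a constant $C_0=C_0(V_1,V_2)>0$ (namely $C_0=|\det E|^{1/2}$) such that
\begin{equation*}
\norm{hg}_{L^2(\rd)}=C_0\,\norm{h}_{L^2(V_1)}\norm{g}_{L^2(V_2)}
\end{equation*}
for every $h\in L^2(V_1)$ and $g\in L^2(V_2)$. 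In particular, $(h,g)\mapsto hg$ is a continuous bilinear map from $L^2(V_1)\times L^2(V_2)$ into $\lrd$.

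Next, fix $f\in\lrd$ and $\eps>0$. By Lemma \ref{lemma2.3}, there exist $N\in\bN$ and functions $h_i\in L^2(V_1)$, $g_i\in L^2(V_2)$, $i=1,\dots,N$, such that
\begin{equation*}
\Big\|f-\sum_{i=1}^N h_i g_i\Big\|_{L^2(\rd)}<\eps/2.
\end{equation*}
By the density of $H_1$ in $L^2(V_1)$ and of $H_2$ in $L^2(V_2)$, for each $i=1,\dots,N$ one can pick $\tilde h_i\in H_1$ and $\tilde g_i\in H_2$ with $\norm{h_i-\tilde h_i}_{L^2(V_1)}$ and $\norm{g_i-\tilde g_i}_{L^2(V_2)}$ as small as desired. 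Expanding
\begin{equation*}
h_i g_i-\tilde h_i\tilde g_i=(h_i-\tilde h_i)g_i+\tilde h_i(g_i-\tilde g_i)
\end{equation*}
and applying the bilinear estimate together with the triangle inequality, one obtains
\begin{equation*}
\Big\|\sum_{i=1}^N h_i g_i-\sum_{i=1}^N \tilde h_i\tilde g_i\Big\|_{L^2(\rd)}\leq C_0\sum_{i=1}^N\Big(\norm{h_i-\tilde h_i}_{L^2(V_1)}\norm{g_i}_{L^2(V_2)}+\norm{\tilde h_i}_{L^2(V_1)}\norm{g_i-\tilde g_i}_{L^2(V_2)}\Big),
\end{equation*}
which can be made smaller than $\eps/2$ by choosing the approximants sufficiently close. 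The element $\sum_{i=1}^N \tilde h_i\tilde g_i$ lies in $\mathfrak{D}$ and is within $\eps$ of $f$, proving the claim.

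No serious obstacle is expected: this is essentially a density plus bilinear-continuity argument, structurally analogous to the classical fact that the algebraic tensor product of dense subspaces is dense in the Hilbert tensor product. The only delicate point is the correct interpretation of $hg$ as a function on $\rd$ (rather than on $V_1\times V_2$) and the appearance of the geometric constant $C_0$, which encodes the possibly non-orthogonal nature of the decomposition $\rd=V_1\oplus V_2$; once an orthonormal basis of each $V_i$ is fixed, this reduces to a routine Jacobian computation in the spirit of \eqref{CV}.
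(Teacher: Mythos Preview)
Your argument is correct and is precisely the standard way to deduce the corollary from Lemma~\ref{lemma2.3}: use the lemma to approximate by finite sums of elementary products in $L^2(V_1)\times L^2(V_2)$, then replace each factor by an element of the dense subsets via the bilinear continuity estimate $\norm{hg}_{L^2(\rd)}=C_0\norm{h}_{L^2(V_1)}\norm{g}_{L^2(V_2)}$. The paper states the corollary without proof, and your argument fills in exactly the intended deduction.
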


\section{Uncertainty principle of Heisenberg-Pauli-Weyl}
Let $\widehat S$ be a metaplectic operator on $L^2(\rd)$ with projection \eqref{blockS} and $B \not=0_d$. 
The case $B=0_d$ does not provide uncertainty, as already observed in \cite[Proposition 1.4]{HUP}:
\begin{proposition}
Let $\widehat{S}$ be a metaplectic operator with $B=0_d$. Then there exists
$f \in L^2(\mathbb{R}^d)\setminus\{0\}$ such that $f$ and $\widehat{S}f$ have compact support.
\end{proposition}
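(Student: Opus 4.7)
The plan is to exploit the explicit expression \eqref{defS2} of $\widehat{S}$ in the degenerate case $B=0_d$, which shows that $\widehat{S}$ acts on any $f$ as a linear change of variables in the argument, multiplied by a unimodular chirp factor. Under this form, compact support is trivially preserved.

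More precisely, I would first observe that when $B=0_d$ the symplectic relation \eqref{SpRel3} reduces to $A^T D = I_d$, so $A \in \mathrm{GL}(d,\mathbb{R})$ and formula \eqref{defS2} is well-defined. From this formula,
\begin{equation*}
    \widehat{S}f(x) = \det(A)^{-1/2}\, e^{i\pi CA^{-1}x\cdot x}\, f(A^{-1}x),
\end{equation*}
the support of $\widehat{S}f$ is contained in $A(\mathrm{supp}(f))$, since the chirp factor is nowhere vanishing. As $A$ is a linear isomorphism of $\mathbb{R}^d$, it maps compact sets to compact sets.

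It then suffices to choose any nonzero compactly supported $f \in L^2(\mathbb{R}^d)$ -- for instance $f = \chi_{B(0,1)}$, the characteristic function of the Euclidean unit ball, or equivalently any nontrivial smooth bump. Such an $f$ is nonzero in $L^2(\mathbb{R}^d)$ and has compact support, and by the observation above $\widehat{S}f$ has compact support as well, contained in $A(\overline{B(0,1)})$. There is no real obstacle here: the statement is essentially a direct manifestation of the fact that for $B=0_d$ the metaplectic operator $\widehat{S}$ performs no genuine mixing between position and frequency, but only a linear rescaling composed with a quadratic phase multiplication, so it cannot create any phase-space spreading of the support.
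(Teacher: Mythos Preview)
Your argument is correct: once $B=0_d$, the relation $A^T D=I_d$ forces $A\in\GL(d,\bR)$, and formula \eqref{defS2} shows that $\widehat S$ is a unimodular chirp times a linear change of variables, hence preserves compact support. The paper does not give its own proof of this proposition but cites it from \cite[Proposition~1.4]{HUP}; your reasoning is exactly the natural one and is what that reference contains.
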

Therefore, we may assume $B\neq0_d$ without loss of generality.\\

Let $V:\bR^r\to\ker(B)^\perp$   an arbitrary  parametrization which maps the canonical basis of $\bR^r$ to an orthonormal basis of $\ker(B)^\perp$. In this case, the Moore-Penrose inverse of $V$ coincides with $V^T$, and $V^TV=I_{ r}$. We refer to Figure \ref{fig:1} for a visive representation of these mappings.
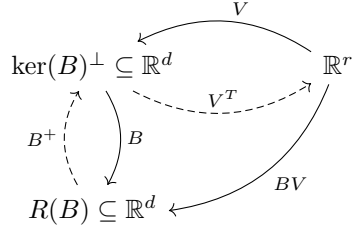
\begin{figure}
\begin{tikzcd}[row sep=huge, column sep=huge]
	\ker(B)^\perp\subseteq\rd\arrow[r,"V^T", bend right=30, dashrightarrow] \arrow[r,"V", bend left=30,leftarrow]  \arrow[d, bend left = 30,"B"] & \bR^r  \arrow[dl, "BV",bend left=30]  \\
	R(B)\subseteq\rd \arrow[u,bend left=30,"B^+",dashrightarrow]
\end{tikzcd}
\caption{A schematic representations of the action of the block $B$, its Moore-Penrose inverse $B^+$, and the parametrization $V$ of $\ker(B)^\perp$, chosen so that $V^+=V^T$.}
\label{fig:1}
\end{figure}
In what follows, we will use the following decompositions of $\rd$ into direct sums, justified by Lemma \ref{ele1}:
\begin{equation}\label{D1}
	x=x_1+x_2\in\ker(B)^\perp\oplus D^TA(\ker(B)),
\end{equation}
and
\begin{equation}\label{D2}
	\xi=\xi_1+\xi_2\in R(B)\oplus A(\ker(B)). 
\end{equation}
If we assume that $\dim (\ker(B)^\perp)=r$ then by \eqref{D1} we have $\dim (D^TA(\ker(B)))=d-r$.
We recall \cite[Corollary 3.2]{HUP}:
\begin{corollary}\label{cor44}
	Let $f\in L^2(\rd)$  and $\xi=\xi_1+\xi_2$, where $\xi_1\in R(B)$ and $\xi_2\in A(\ker(B))$. Then, the following integral representation holds
	\begin{align}
		\label{f2}
		\widehat Sf(\xi)&{=}\mu_S e^{i\pi (DB^+ \xi_1\cdot\xi_1+DC^T\xi_2\cdot\xi_2)}\int_{\ker(B)^\perp}f(t+D^T\xi_2)e^{i\pi B^+At\cdot t}e^{-2\pi i (B^+\xi_1-C^T\xi_2)\cdot t}dt.
	\end{align}
	The formula above is understood as the equality of two $L^2(\R^d)$ functions.
\end{corollary}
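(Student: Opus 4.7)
The plan is to specialize the integral representation \eqref{defS1} of $\widehat S f$ to the structured input $x = \xi_1 + \xi_2$ with $\xi_1 \in R(B)$ and $\xi_2 \in A(\ker(B))$, then perform a translation in the integration variable that absorbs the $\xi_1$-contribution from the translation argument of $f$. All subsequent work is a reorganization of the resulting phase into quadratic, linear, and constant parts in the new variable.

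The key geometric step is to observe that $D^T(R(B)) \subseteq \ker(B)^\perp$. Indeed, for $w \in \ker(B)$ and $\xi_1 = B\eta \in R(B)$, the symplectic identity $B^T D = D^T B$ gives $\langle D^T\xi_1, w\rangle = \langle \eta, B^T D w\rangle = \langle \eta, D^T B w\rangle = 0$. Hence the substitution $y = t - D^T\xi_1$ is a translation internal to $\ker(B)^\perp$: it preserves Lebesgue measure and the integration domain, and replaces the argument $y + D^T\xi$ of $f$ by $t + D^T\xi_2$, exactly as in the target formula.

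Next, I would expand the three exponentials under the integral sign and collect terms in $t$. The quadratic piece $B^+A(t-D^T\xi_1)\cdot(t-D^T\xi_1)$ splits cleanly because the bilinear form $B^+A$ is symmetric on $\ker(B)^\perp$: this is a consequence of the symplectic relation $AB^T = BA^T$ together with the Moore-Penrose identities $BB^+ = P_{R(B)}$ and $B^+B = P_{\ker(B)^\perp}$. Similarly, $e^{i\pi DC^T\xi\cdot\xi}$ decomposes via $\xi = \xi_1 + \xi_2$ using the symmetry $CD^T = DC^T$, which kills the cross terms cleanly. Grouping by powers of $t$, the quadratic-in-$t$ phase is immediately $e^{i\pi B^+ A t\cdot t}$.

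The main obstacle is the algebraic verification that the linear-in-$t$ coefficient collapses to $-2\pi i(B^+\xi_1 - C^T\xi_2)$ and that the purely $\xi$-dependent constant phase collapses to $i\pi(DB^+\xi_1\cdot\xi_1 + DC^T\xi_2\cdot\xi_2)$. Both reduce to identities on $R(B)$ which in the invertible case $B\in\GL(d,\bR)$ are one-line consequences of $B^{-1}$ and the symplectic relation $AD^T - BC^T = I_d$; in the degenerate case they follow from the same relation together with the fact that all intermediate vectors live in subspaces on which the relevant Moore-Penrose projections act as the identity, as guaranteed by Lemma \ref{ele1} and the decompositions \eqref{D1}--\eqref{D2}. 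This is careful bookkeeping rather than a conceptual difficulty, but it is the step where the degeneracy of $B$ really has to be handled with care.
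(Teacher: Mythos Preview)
The paper does not supply its own proof of this corollary: it is quoted verbatim from \cite[Corollary 3.2]{HUP} (see the line ``We recall \cite[Corollary 3.2]{HUP}'' immediately preceding the statement). So there is no in-paper argument to compare against, and any correct derivation you give goes beyond what the present paper contains.

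Your approach is correct and is in fact the natural one. The key geometric input $D^{T}(R(B))\subseteq\ker(B)^{\perp}$ is exactly right, and your justification via $B^{T}D=D^{T}B$ is clean. The symmetry of the bilinear form $(u,v)\mapsto B^{+}Au\cdot v$ on $\ker(B)^{\perp}$ does follow from $AB^{T}=BA^{T}$ together with $B^{+}B=P_{\ker(B)^{\perp}}$, as you say. For the algebraic collapses you flag as ``the main obstacle'', the single identity that drives everything is $AD^{T}-BC^{T}=I_{d}$ (the transpose of $DA^{T}-CB^{T}=I_{d}$), which gives $B^{+}AD^{T}\xi_{1}=B^{+}\xi_{1}+P_{\ker(B)^{\perp}}C^{T}\xi_{1}$ for $\xi_{1}\in R(B)$; from this both the linear-in-$t$ coefficient and the constant phase drop out, the latter after noting that the residual term $DP_{\ker(B)}C^{T}\xi_{1}\cdot\xi_{1}$ vanishes because $D^{T}\xi_{1}\in\ker(B)^{\perp}$. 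So your outline is complete; only this last orthogonality observation for the constant phase deserves to be made explicit.
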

As a consequence \cite[Corollary 3.7]{HUP}
\begin{corollary}\label{cor45}
	Under the notation of Corollary \ref{cor44},  we can write
	\[
		|\widehat Sf(\xi)|=\mu_S |\widehat g_{\xi_2}(V^TB^+\xi_1)|,
	\]
	where $\xi=\xi_1+\xi_2$, $\xi_1\in R(B)$ and $\xi_2\in A(\ker(B))$, and, for every  $\xi_2\in  A(\ker(B))$,
	$\widehat g_{\xi_2}$ is the Fourier transform on $\bR^r$ of the function
	\begin{equation}\label{gu}
		g_{\xi_2}(u)=f(Vu+D^T\xi_2)e^{i\pi (V^TB^+AVu\cdot u-2V^TC^T\xi_2\cdot u)}, \qquad u\in\bR^r.
	\end{equation}
\end{corollary}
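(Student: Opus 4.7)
The statement is essentially a rewriting of the integral representation in Corollary \ref{cor44} as a Fourier transform on $\bR^r$. The plan is: pass to the modulus (eliminating the unimodular prefactor outside the integral), apply the isometric change of variables $t=Vu$ to convert the integration over $\ker(B)^\perp$ into an integration over $\bR^r$, and then read off the Fourier transform of $g_{\xi_2}$ at the frequency $V^TB^+\xi_1$.

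First I would take the modulus of \eqref{f2}. The factor $e^{i\pi(DB^+\xi_1\cdot\xi_1+DC^T\xi_2\cdot\xi_2)}$ has unit modulus and disappears, leaving
\[
|\widehat Sf(\xi)|=\mu_S\left|\int_{\ker(B)^\perp}f(t+D^T\xi_2)\,e^{i\pi B^+At\cdot t}\,e^{-2\pi i(B^+\xi_1-C^T\xi_2)\cdot t}\,dt\right|.
\]

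Next I would change variables by $t=Vu$, $u\in\bR^r$. Since $V$ maps the canonical basis of $\bR^r$ to an orthonormal basis of $\ker(B)^\perp$, formula \eqref{CV} shows that the associated Gram factor equals $1$ and the measure is preserved. The identity $V^TV=I_r$ then lets me move $V$ inside the inner products:
\[
B^+AVu\cdot Vu=V^TB^+AVu\cdot u,\qquad (B^+\xi_1-C^T\xi_2)\cdot Vu=(V^TB^+\xi_1-V^TC^T\xi_2)\cdot u,
\]
while the translate $f(t+D^T\xi_2)=f(Vu+D^T\xi_2)$ is unchanged in form.

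Finally I would regroup the resulting integrand. All factors depending only on $u$ and $\xi_2$ are exactly those entering the definition of $g_{\xi_2}$ in \eqref{gu}, and the residual linear phase $e^{-2\pi iV^TB^+\xi_1\cdot u}$ is the standard Fourier kernel on $\bR^r$. The integral therefore reduces to $\widehat{g_{\xi_2}}(V^TB^+\xi_1)$, and the claimed identity follows upon taking moduli. There is no conceptual obstacle here; the only point that requires care is the bookkeeping of the phase factors, together with the verification that the change of variables introduces no Jacobian, which is precisely the content of the orthonormality of the columns of $V$.
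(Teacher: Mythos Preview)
Your proposal is correct and is precisely the approach the paper indicates: in the proof of Theorem~\ref{dirHeis} the authors write that ``the change of variables $V^Tt=\eta$ in \eqref{f2}'' yields the identity of Corollary~\ref{cor45}, which is exactly your substitution $t=Vu$ together with the observation that $V$ is an isometry (so the Jacobian is $1$). The only step worth carrying out explicitly is the phase bookkeeping you mention at the end---tracking the sign of the $V^TC^T\xi_2\cdot u$ term---but this is straightforward and your outline is complete.
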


Observe that, by a standard Fubini argument, $g_{\xi_2}\in L^2(\bR^r)$, for every fixed $\xi_2\in A(\ker(B)).$
Using the decomposition \eqref{D2} and observing that  $\dim(R(B))=r$ we infer $\dim  A(\ker(B))=d-r$. 

\begin{theorem}\label{dirHeis}
	Let $\widehat S$ be a metaplectic operator on $L^2(\rd)$ with projection \eqref{blockS}. Assume that $1\leq r= \mathrm{rank}(B)\leq d$ and fix an orthonormal basis $\{v_1,\dots,v_r\}$ of $\ker(B)^\perp$. Assume the decompositions in \eqref{D1} and \eqref{D2}. Then, for every  $\alpha,\beta\in\bR$, $f\in\lrd$, 
			\begin{equation}\label{e5}
			\left(\int_{\bR^d}|x_1^{(j)}-\alpha|^2 |f(x)|^2dx\right)^{1/2}\left(\int_{\bR^d}|\xi_1^{(j)}-\beta|^2|\widehat Sf(\xi)|^2d\xi\right)^{1/2}\geq \frac{1}{4\pi}K_S\norm{f}_2^2,
		\end{equation}
		 $j=1,\dots,r$, where $x_1^{(j)}$ is the $j$-th component of $x_1$ with respect to the orthonormal basis $\{v_1,\dots,v_r\}$ of $\ker(B)^\perp$, $\xi_1^{(j)}$  the $j$-th component of $\xi_1$ with respect the basis $\{ Bv_1,\dots,Bv_r\}$ of $R(B)$ and $K_S$ is the constant associated with $S$ defined in \eqref{definitionKs}. Furthermore, the equality holds if and only if:
            \begin{equation}\label{flimitej}
f(x_1+x_2)
= \Theta (x_2)\,
  G_j\!\bigl(x_1^{(1)},\dots,x_1^{(j-1)},x_1^{(j+1)},\dots,x_1^{(r)}\bigr)\,
  e^{i\Phi(x_1,x_2)},
\end{equation}
where
\[
\Phi(x_1,x_2)
= 2\pi \beta x_1^{(j)}-{\gamma_j} (x_1^{(j)}-\alpha)^2
- \pi \bigl(B^+ A x_1 \cdot x_1 - 2 C^{T} D^{-T} x_2 \cdot x_1\bigr),
\]
        for some $\Theta \in L^2(D^TA(\ker(B))$,  $G_j\in L^2(\bR^{r-1})$ (independent from $x_1^{(j)}$), and $\gamma_j>0$.
\end{theorem}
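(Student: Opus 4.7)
The plan is to reduce the inequality, fiber by fiber, to the classical directional Heisenberg inequality \eqref{e00} applied to the auxiliary functions $g_{\xi_2}\in L^2(\mathbb{R}^r)$ defined in \eqref{gu}. The crucial ingredients are the identities $|\widehat Sf(\xi)|=\mu_S|\widehat g_{\xi_2}(V^TB^+\xi_1)|$ and $|g_{\xi_2}(u)|=|f(Vu+D^T\xi_2)|$ provided by Corollary~\ref{cor45}, together with the fact that $V$ is an isometric parametrization of $\ker(B)^\perp$ with $Ve_j=v_j$, so that $x_1^{(j)}=u_j$ and $\xi_1^{(j)}=(V^TB^+\xi_1)_j$.

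First I would pass both integrals in the statement to integrals on $\mathbb{R}^r\times A(\ker(B))$. In the spatial integral $I_1$, I decompose $x=x_1+x_2\in\ker(B)^\perp\oplus D^TA(\ker(B))$ and parametrize $x_1=Vu$ (unit Jacobian since $V^TV=I_r$) and $x_2=D^T\xi_2$, invoking Lemma~\ref{ele1}(ii) together with \eqref{CV}; this introduces the factor $q_{A(\ker(B))}(D^T)$. In the frequency integral $I_2$, I use $\xi=\xi_1+\xi_2\in R(B)\oplus A(\ker(B))$ and parametrize $\xi_1=BV\eta$; a brief SVD argument shows $q_{\mathbb{R}^r}(BV)=\sigma(B)$, which together with $\mu_S^2$ yields the factor $1/q_{R(B)^\perp}(A^T)$. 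The outcome is
\begin{align*}
I_1&=q_{A(\ker(B))}(D^T)\int_{A(\ker(B))}\int_{\mathbb{R}^r}|u_j-\alpha|^2|g_{\xi_2}(u)|^2\,du\,d\xi_2,\\
I_2&=\frac{1}{q_{R(B)^\perp}(A^T)}\int_{A(\ker(B))}\int_{\mathbb{R}^r}|\eta_j-\beta|^2|\widehat g_{\xi_2}(\eta)|^2\,d\eta\,d\xi_2,
\end{align*}
and analogously $\norm{f}_2^2=q_{A(\ker(B))}(D^T)\int_{A(\ker(B))}\norm{g_{\xi_2}}_{L^2(\mathbb{R}^r)}^2\,d\xi_2$.

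Next, for a.e.\ $\xi_2$ I apply the classical directional Heisenberg inequality \eqref{e00} in the $j$-th variable to $g_{\xi_2}$ on $\mathbb{R}^r$, obtaining the pointwise bound $\norm{g_{\xi_2}}_2^2\le 4\pi F(\xi_2)G(\xi_2)$, with $F(\xi_2)^2$ and $G(\xi_2)^2$ the natural second-moment integrals appearing on the right-hand sides above. Integrating in $\xi_2$ and applying the Cauchy-Schwarz inequality on $L^2(A(\ker(B)))$ yields
\[
\frac{\norm{f}_2^2}{q_{A(\ker(B))}(D^T)}\le 4\pi\sqrt{\frac{I_1}{q_{A(\ker(B))}(D^T)}\cdot q_{R(B)^\perp}(A^T)\,I_2},
\]
which rearranges to $\sqrt{I_1I_2}\ge K_S\norm{f}_2^2/(4\pi)$ with $K_S=(q_{R(B)^\perp}(A^T)\,q_{A(\ker(B))}(D^T))^{-1/2}$. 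To match the definition \eqref{definitionKs}, I invoke multiplicativity of $q_L$ under composition together with Lemma~\ref{ele1}(iii), namely $q_{R(B)^\perp}(AA^T)=q_{R(B)^\perp}(A^T)\,q_{\ker(B)}(A)$ and $q_{\ker(B)}(D^TA)=q_{A(\ker(B))}(D^T)\,q_{\ker(B)}(A)$; the factor $q_{\ker(B)}(A)$ cancels cleanly.

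Finally, for the extremizers, equality in the fiberwise Heisenberg forces $g_{\xi_2}$ to take the form $g_{\xi_2}(u)=H_{\xi_2}(u_1,\ldots,u_{j-1},u_{j+1},\ldots,u_r)\,e^{-2\pi i\beta u_j}e^{-\gamma_j(\xi_2)(u_j-\alpha)^2}$, while equality in the Cauchy-Schwarz step requires $F(\xi_2)/G(\xi_2)$ to be constant a.e.; computing the Gaussian moments $F(\xi_2)^2/\norm{g_{\xi_2}}_2^2=1/(4\gamma_j(\xi_2))$ and $G(\xi_2)^2/\norm{g_{\xi_2}}_2^2=\gamma_j(\xi_2)/(4\pi^2)$ shows that $\gamma_j(\xi_2)\equiv\gamma_j$ is independent of $\xi_2$. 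Inverting $x_1=Vu$, $x_2=D^T\xi_2$ and using the identities $V^TB^+AVu\cdot u=B^+Ax_1\cdot x_1$ and $V^TC^T\xi_2\cdot u=C^TD^{-T}x_2\cdot x_1$, one absorbs the $\xi_2$-dependence of $H_{\xi_2}$ into a product $\Theta(x_2)\,G_j(x_1^{(1)},\ldots,x_1^{(j-1)},x_1^{(j+1)},\ldots,x_1^{(r)})$ and reads off the phase in \eqref{flimitej} directly. I expect the bookkeeping of the geometric volume factors and the algebraic identity relating $K_S$ to \eqref{definitionKs} to be the main obstacle; everything else is a careful translation via Corollary~\ref{cor45}.
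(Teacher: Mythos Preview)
Your proposal is correct and follows essentially the same strategy as the paper: reduce to the classical directional Heisenberg inequality \eqref{e00} for $g_{\xi_2}$ via Corollary~\ref{cor45}, integrate over $A(\ker(B))$, and combine the fibers with Cauchy--Schwarz, tracking the geometric volume factors along the way. The only cosmetic difference is that the paper parametrizes $x_2$ by $y_2\in\ker(B)$ (via $x_2=D^TAy_2$) and $\xi_2$ by $\xi_2''\in R(B)^\perp$ (via $\xi_2=AA^T\xi_2''$), thereby obtaining $q_{\ker B}(D^TA)$ and $q_{R(B)^\perp}(AA^T)$ directly, whereas you stop one step earlier at $q_{A(\ker(B))}(D^T)$ and then invoke the multiplicativity identities $q_{R(B)^\perp}(AA^T)=q_{\ker B}(A)\,q_{R(B)^\perp}(A^T)$ and $q_{\ker B}(D^TA)=q_{A(\ker B)}(D^T)\,q_{\ker B}(A)$ to match \eqref{definitionKs}; the two routes are equivalent.
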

\begin{proof}
If at least one of the integrals is infinite we are done. Assume now that both the integrals in \eqref{e5} are finite.
	First, we treat the more difficult scenario in which $\ker B\ne \{0\}$. Let $V:\bR^r\to \ker(B)^\perp$ be a parametrization of $\ker(B)^\perp$ so that $V^TV=I_r$ and $$\{V^Tv_1,\dots,V^Tv_r\}=\{e_1\dots,e_r\}$$ is the canonical basis of $\bR^r$. For fixed $\xi_2\in A(\ker(B))$, we consider $g_{\xi_2}$ in \eqref{gu} and the change of variables $V^Tt=\eta$ in \eqref{f2} which yields 
	\[
|\widehat Sf(\xi)|=\mu_S |\widehat g_{\xi_2}(V^TB^+\xi_1)|,
\]
see Corollary \ref{cor45}.
Since $\xi_1\in R(B)$ and $B^+:R(B)\to \ker(B)^\perp$, we write $\eta=V^TB^+\xi_1\in \bR^r$ so that $\xi_1=BV\eta$ and 
	\begin{equation}\label{relSfhatg}
	|\widehat g_{\xi_2}(\eta)|=\frac{1}{\mu_S}|\widehat Sf(BV\eta+\xi_2)|, \qquad \eta\in\bR^r.
\end{equation}
For every $\alpha,\beta\in\bR$, using the directional  Heisenberg's uncertainty principle for $g_{\xi_2}$ in \eqref{e00} (or, equivalently, \eqref{e0} in dimension $d=1$) we obtain
	\begin{equation}\label{Heisen-g-1}
		\left(\int_{\bR^r}|u_j-\alpha|^2 | g_{\xi_2}(u)|^2du\right)^{1/2}\left(\int_{\bR^r}|\eta_j-\beta|^2|\widehat  g_{\xi_2}(\eta)|^2d\eta\right)^{1/2}\geq\,\,\frac{\norm{ g_{\xi_2}}_2^2}{4\pi}.
	\end{equation}
	By \eqref{gu},
	\begin{equation}
		\norm{ g_{\xi_2}}_2^2=\int_{\bR^r}|f(Vu+D^T\xi_2)|^2du=\int_{\ker(B)^\perp}|f(x_1+D^T\xi_2)|^2dx_1.
	\end{equation}

For $\xi_2\in A(\ker(B))$, using Lemma \ref{ele1} we write $D^T\xi_2=D^TAy_2,\ y_2\in \ker B,$ so that, by \eqref{qL},
	\begin{align*}
		\int_{A(\ker(B))}\norm{ g_{\xi_2}}_2^2d\xi_2&=\int_{A(\ker(B))}\int_{\ker(B)^\perp}|f(x_1+D^T\xi_2)|^2dx_1d\xi_2\\
		&=q_{\ker B}(A)\int_{\ker(B)}\int_{\ker(B)^\perp}|f(x_1+D^TAy_2)|^2dx_1dy_2.\\
	\end{align*}
The decomposition\begin{equation*}
    \rd= \ker(B)^{\perp}\oplus D^TA(\ker B),
\end{equation*}
allows to write every $x\in \rd$  as \begin{equation*}
    x=x_1+D^TAy_2,\quad x_1\in \ker(B)^{\perp},\ y_2\in\ker(B).
\end{equation*}
Consider the linear map\begin{equation*}
    \Psi: \ker(B)^{\perp}\times \ker (B)\to \rd,\quad \Psi(x_1,y_2)=x_1+D^TAy_2.
\end{equation*}
Since $\Psi$ is a linear bijection and $\dim D^TA(\ker B)=\dim \ker (B)$, its Jacobian determinat is constant and equal to $q_{\ker B}(D^TA)$. Therefore, by the change of variables formula, 
\begin{align*}
   \int_{\ker(B)}\int_{\ker(B)^\perp}|f(x_1+D^TAy_2)|^2dx_1dy_2=&\frac{1}{q_{\ker B}(D^TA)}\int_{\rd}|f(x)|^2dx=\frac{1}{q_{\ker B}(D^TA)}\norm{f}_2^2.
\end{align*}
So, \begin{equation}\label{e3}
    \int_{A(\ker B)}\norm{g_{\xi_2}}_2^2d\xi_2=\frac{q_{\ker B}(A)}{q_{\ker B}(D^TA)}\norm{f}_2^2.
\end{equation}

	Next, we focus on the first integral in \eqref{Heisen-g-1}.
	\begin{align}
		\int_{\bR^r}|u_j-\alpha|^2 |g(u)|^2du&=\int_{\ker(B)^\perp}|x_1^{(j)}-\alpha|^2|f(x_1+D^T\xi_2)|^2dx_1,
	\end{align}
	where we used that $x_1^{(j)}= x_1\cdot v_1=x_1\cdot Ve_j=V^Tx_1\cdot e_j= u\cdot e_j=u_j$. Consequently, 
	\begin{equation}\label{e1}
		\int_{A(\ker(B))}\int_{\bR^r}|u_j-\alpha|^2 | g_{\xi_2}(u)|^2dud\xi_2=\frac{q_{\ker B}(A)}{q_{\ker B}(D^TA)}\int_{\rd}|x_1^{(j)}-\alpha|^2|f(x)|^2dx.
	\end{equation}
	Concerning the second integral in \eqref{Heisen-g-1}, setting $\xi_1=BV\eta\in R(B)$, by using the expression:
    \begin{align}
		BV\eta=\xi_1=\displaystyle\sum_{j=1}^r\xi_1^{(j)}Bv_j=\displaystyle\sum_{j=1}^r\xi_1^{(j)}BVe_j=BV(\displaystyle\sum_{j=1}^r\xi_1^{(j)}e_j),
	\end{align}
    and that $BV: \bR^r\to R(B)$ is an isomorphism, so $\eta_j=\xi_1^{(j)}$, we get:
	\begin{align*}
		\int_{\bR^r}|\eta_j-\beta|^2|\widehat  g_{\xi_2}(\eta)|^2d\eta&=\frac{1}{\mu_S^2}\int_{\bR^r}|\eta_j-\beta|^2|\widehat Sf(BV\eta+\xi_2)|^2d\eta\\
		&=\frac{1}{\sigma(B)\mu_S^2}\int_{R(B)}|\xi_1^{(j)}-\beta|^2|\widehat Sf(\xi_1+\xi_2)|^2d\xi_1,
	\end{align*}
   In the last equality we used the fact that $V$ is an isometry, so, the product of non-zero singular values of $B$ is the $r$-volume of the symplex generated by the image of the canonical basis of $\bR^r$ though $BV$.
   
	 Therefore,  using the explicit expression  of the constant $\mu_S$ in \eqref{muS} we obtain \begin{equation*}
	    \int_{\bR^r}|\eta_j-\beta|^2|\widehat  g_{\xi_2}(\eta)|^2d\eta=q_{R(B)^{\perp}}(A^T)\int_{R(B)}|\xi_1^{(j)}-\beta|^2|\widehat Sf(\xi_1+\xi_2)|^2d\xi_1,
	\end{equation*}
	so that
	\begin{align*}
		\int_{A(\ker(B))}\int_{\bR^r}|\eta_j-\beta|^2|\widehat  g_{\xi_2}(\eta)&|^2d\eta d\xi_2=q_{R(B)^{\perp}}(A^T)\int_{A(\ker(B))}\int_{R(B)}|\xi_1^{(j)}-\beta|^2|\widehat Sf(\xi_1+\xi_2)|^2d\xi_1d\xi_2\\
		&=q_{R(B)^{\perp}}(A^T)q_{\ker B}(A)\int_{\ker B}\int_{R(B)}|\xi_1^{(j)}-\beta|^2|\widehat Sf(\xi_1+A\xi_2')|^2d\xi_1d\xi_2'.
	\end{align*}
    Now, we recall that $A^T:R(B)^{\perp}\to \ker B$ by Lemma \ref{ele1} is an isomorphism, so we write:
\begin{align*}
		q_{R(B)^{\perp}}(A^T)q_{\ker B}(A)\int_{\ker B}\int_{R(B)}&|\xi_1^{(j)}-\beta|^2|\widehat Sf(\xi_1+A\xi_2')|^2d\xi_1d\xi_2'\\=&q_{R(B)^{\perp}}(A^T)q_{\ker B}(A)\int_{A^T(R(B)^{\perp})}\int_{R(B)}|\xi_1^{(j)}-\beta|^2|\widehat Sf(\xi_1+A\xi_2')|^2d\xi_1d\xi_2'\\
        =&q_{\ker B}(A)q_{R(B)^{\perp}}(A^T)^2\int_{R(B)^{\perp}}\int_{R(B)}|\xi_1^{(j)}-\beta|^2|\widehat Sf(\xi_1+AA^T\xi_2'')|^2d\xi_1d\xi_2''.
	\end{align*}
As before, the linear bijection $R(B)\times R(B)^{\perp}\to \rd$ given by $$(\xi_1,\xi_2'')\mapsto \xi_1+AA^T\xi_2''$$ has  Jacobian determinant equal to $q_{R(B)^\perp}(AA^T)$.
Hence:\begin{equation*}
    \int_{R(B)^{\perp}}\int_{R(B)}|\xi_1^{(j)}-\beta|^2|\widehat Sf(\xi_1+AA^T\xi_2'')|^2d\xi_1d\xi_2''=\frac{1}{q_{R(B)^{\perp}}(AA^T)}\int_{\rd}|\xi_1^{(j)}-\beta|^2|\widehat S f(\xi)|^2d\xi.
\end{equation*}

Therefore, \begin{equation}
    \int_{A(\ker(B))}\int_{\bR^r}|\eta_j-\beta|^2|\widehat  g_{\xi_2}(\eta)|^2d\eta d\xi_2=\frac{q_{\ker B}(A)q_{R(B)^\perp}(A^T)^2}{q_{R(B)^\perp}(AA^T)}\int_{\rd}|\xi_1^{(j)}-\beta|^2|\widehat S f(\xi)|^2d\xi.\label{e2}
\end{equation}

Integrating the left-hand side of \eqref{Heisen-g-1} on $A(\ker(B))$ and using H\"{o}lder's inequality with $p=q=2$
	\begin{align}\notag
	\int&_{A(\ker(B))}\left(\int_{\bR^r}|u_j-\alpha|^2 | g_{\xi_2}(u)|^2du\right)^{1/2}\left(\int_{\bR^r}|\eta_j-\beta|^2|\widehat  g_{\xi_2}(\eta)|^2d\eta\right)^{1/2} d\xi_2\\ \label{holder1}
	&\,\,\,\leq 	\left(\int_{A(\ker(B))}\int_{\bR^r}|u_j-\alpha|^2 |g_{\xi_2}(u)|^2du d\xi_2\right)^{1/2}\left(\int_{A(\ker(B))}\int_{\bR^r}|\eta_j-\beta|^2|\widehat g_{\xi_2}(\eta)|^2d\eta d\xi_2\right)^{1/2}\\
	&\,\,\,=\frac{q_{\ker B}(A)q_{R(B)^\perp}(A^T)}{\sqrt{q_{\ker B}(D^TA)q_{R(B)^{\perp}}(AA^T)}}\left(\int_{\bR^d}|x_1^{(j)}-\alpha|^2 |f(x)|^2dx\right)^{1/2}\left(\int_{\bR^d}|\xi_1^{(j)}-\beta|^2|\widehat Sf(\xi)|^2d\xi\right)^{1/2}.
\end{align}
Finally, integrating both sides of  \eqref{Heisen-g-1} on $A(\ker(B))$, using the majorazation above for the left-hand side, \eqref{e3} for the right-hand side and Formula \eqref{muS}, we obtain \eqref{e5}.\\
Now, the equality in \eqref{Heisen-g-1} holds if and only if \begin{equation}\label{glimitej}g_{\xi_2}(u)=\zeta(\xi_2)G_j(u_1,\dots,u_{j-1},u_{j+1},\dots,u_r)e^{2\pi i \beta u_j}e^{-{\gamma_j}(u_j-\alpha)^2},\end{equation}for some constant $\zeta(\xi_2)\in\bC$ depending on $\xi_2$, $\gamma_j>0$ and some $G_j\in L^2(\bR^{r-1})$. By using the definition \eqref{gu} of $g_{\xi_2}$ we can write:\begin{align*}
    f(Vu+D^T\xi_2)=&g_{\xi_2}(u)e^{-i\pi (V^TB^+AVu\cdot u-2V^TC^T\xi_2\cdot u)}\\
    =&\zeta(\xi_2)G_j(u_1,\dots,u_{j-1},u_{j+1},\dots,u_r)e^{2\pi i \beta u_j}e^{-\gamma_j(u_j-\alpha)^2}e^{-i\pi (V^TB^+AVu\cdot u-2V^TC^T\xi_2\cdot u)},
\end{align*}
for every $u\in\bR^r$, $\xi_2\in A(\ker(B))$. Our function $f$ is in $L^2(\bR^d)$, so the function:\begin{align*}
    \bR^r\times A(\ker(B))\to \bC:\,\,
    & (u,\xi_2)\mapsto f(Vu+D^T\xi_2),
\end{align*}
belongs to $L^2(\bR^r\times A(\ker(B)))$. By Fubini's theorem, for every fixed $u\in\bR^r$, $f(Vu+D^T\cdot)\in L^2(A(\ker(B))$, this implies that the function:
\begin{equation*}
    A(\ker(B))\to \bC:\xi_2\longmapsto \zeta(\xi_2)e^{2\pi i V^TC^T\xi_2\cdot u}
\end{equation*}
is in $L^2(A(\ker(B))$, so $\zeta(\cdot)\in L^2(A(\ker(B))$, in fact:\begin{equation*}
    \int_{A(\ker(B))}|\zeta(\xi_2)|^2d\xi_2=\int_{A(\ker(B))}|\zeta(\xi_2)e^{2\pi i V^TC^T\xi_2\cdot u}|^2d\xi_2<\i.
\end{equation*}
By using the change of variable $x_1=Vu$ we obtain the expression:\begin{align}
    f(x_1+D^T\xi_2)&=\zeta(\xi_2)G_j(x_1^{(1)},\dots,x_1^{(j-1)},x_1^{(j+1)},\dots,x_1^{(r)})e^{2\pi i \beta x_1^{(j)}}e^{-\gamma_j(x_1^{(j)}-\alpha)^2}\\
    &\quad \times \quad e^{-i\pi (B^+Ax_1\cdot x_1-2C^T\xi_2\cdot x_1)},
\end{align}
for every $x_1\in \ker(B)^\perp,\xi_2\in A(\ker(B))$. Writing $x_2=D^T\xi_2$ and choosing $\Theta \in L^2(D^T(A(\ker B))$ such that $\Theta\circ D^T=\zeta$, we obtain \eqref{flimitej}. It remains only to verify that the choice of $g_{\xi_2}$ given by \eqref{glimitej} realizes the equality in \eqref{holder1}, where we applyed the H\"older inequality for the two $L^2(A(\ker(B))$-functions:\begin{equation*}
    \xi_2\longmapsto (\int_{\bR^r}|u_j-\alpha|^2 | g_{\xi_2}(u)|^2du)^{1/2},\quad  \xi_2\longmapsto (\int_{\bR^r}|\eta_j-\beta|^2|\widehat  g_{\xi_2}(\eta)|^2d\eta)^{1/2}.
\end{equation*}
The inequality \eqref{holder1} becomes an equality if and only if there exists a constant $c_j\in\bC$ such that:\begin{equation*}
    \left(\int_{\bR^r}|u_j-\alpha|^2 | g_{\xi_2}(u)|^2du\right)^{1/2}=c_j\cdot \left(\int_{\bR^r}|\eta_j-\beta|^2|\widehat  g_{\xi_2}(\eta)|^2d\eta\right)^{1/2},\quad \forall \,\xi_2\in A(\ker(B)).
\end{equation*}
If we take $g_{\xi_2}(u)=\zeta(\xi_2)G_j(u_1,\dots,u_{j-1},u_{j+1},\dots,u_r)e^{2\pi i \beta u_j}e^{-\gamma_j(u_j-\alpha)^2}$,  the term $|\zeta(\xi_2)|^2$ interchanges with both integrals and the relation above is trivially satisfied.
It remains to treat the case in which $\ker B=\{0\}$. In this scenario we have that $A(\ker B)=\{0\}$ and $\ker B^{\perp}=R(B)=\rd$, so we only need one auxiliary function \begin{equation*}
    g(u):=f(u)e^{i\pi B^{-1}Au\cdot u},\quad u\in \rd.
\end{equation*}
Following the same steps as above, applying the classical directional Heisenberg uncertainty principle on $g$ one can easily derive the inequality
\begin{equation*}
			\left(\int_{\bR^d}|x^{(j)}-\alpha|^2 |f(x)|^2dx\right)^{1/2}\left(\int_{\bR^d}|\xi^{(j)}-\beta|^2|\widehat Sf(\xi)|^2d\xi\right)^{1/2}\geq\frac{\norm{f}_2^2}{4\pi}.
		\end{equation*}
Where we recall that $x^{(j)}$ is the $j$-th component of $x$ with respect to the orthonormal basis basis $\{v_1,\dots,v_d\}$ of $\rd=\ker B^{\perp}$ (e.g. the canonical basis), and $\xi^{(j)}$ is the $j$-th component of $\xi$ with respect the basis $\{Bv_1,\dots,Bv_d\}$ of $R(B)=\rd$. The expression of the functions realizing the equality in \eqref{e6} is already expressed in \eqref{flimitej} by substituting $\Theta (x_2)$ with a constant and by setting $x_1=x$ and $\xi_2=0$. We conclude by underlying that the inequality \eqref{e5} describes both cases  since, if $\ker B=\{0\}$, then $q_{\ker B}(D^TA)=1$.
\end{proof}

\begin{theorem}\label{Teo2}
   Under the assumptions of Theorem \ref{dirHeis}, with $\a,\beta \in\bR$ replaced by 
   $\alpha\in\ker(B)^\perp$, $\beta\in R(B)$, respectively,  we have
			\begin{equation}\label{e6}
			\left(\int_{\bR^d}|x_1-\alpha|^2 |f(x)|^2dx\right)^{1/2}\left(\int_{\bR^d}|\xi_1-\beta|^2|\widehat Sf(\xi)|^2d\xi\right)^{1/2}\geq\frac{\mathrm{Tr}((B^TB)^{1/2})}{4\pi}K_S\norm{f}_2^2,
		\end{equation}
		for every $f\in\lrd$. Furthermore, the constant in Theorem~\ref{Teo2} is sharp for every metaplectic operator
		$\widehat S$ with $\operatorname{rank}(B)=r\ge1$.
\end{theorem}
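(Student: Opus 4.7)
The plan is to derive the vector-valued inequality \eqref{e6} from the directional estimate \eqref{e5} of Theorem~\ref{dirHeis} via the Cauchy--Schwarz inequality, using a judicious choice of the orthonormal basis of $\ker(B)^\perp$. Concretely, I would pick $\{v_1,\dots,v_r\}$ to be right singular vectors of the restriction $B|_{\ker(B)^\perp}$, so that $Bv_j=\sigma_j u_j$, where $\sigma_1,\dots,\sigma_r$ are the nonzero singular values of $B$ and $\{u_1,\dots,u_r\}$ is an orthonormal set in $R(B)$. With this choice, $\langle Bv_i,Bv_j\rangle=\sigma_j^2\delta_{ij}$, so expanding $x_1-\alpha=\sum_j(x_1^{(j)}-\alpha^{(j)})v_j$ and $\xi_1-\beta=\sum_j(\xi_1^{(j)}-\beta^{(j)})Bv_j$ yields
\[
|x_1-\alpha|^2=\sum_{j=1}^r(x_1^{(j)}-\alpha^{(j)})^2,
\qquad
|\xi_1-\beta|^2=\sum_{j=1}^r \sigma_j^2(\xi_1^{(j)}-\beta^{(j)})^2.
\]

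Setting $X_j:=\int_{\bR^d}|x_1^{(j)}-\alpha^{(j)}|^2|f(x)|^2\,dx$ and $Y_j:=\int_{\bR^d}|\xi_1^{(j)}-\beta^{(j)}|^2|\widehat S f(\xi)|^2\,d\xi$, the decomposition above rewrites the left-hand side of \eqref{e6} as $\sqrt{(\sum_j X_j)(\sum_j\sigma_j^2 Y_j)}$. I would then apply Cauchy--Schwarz in the form
\[
\sum_{j=1}^r \sigma_j\sqrt{X_j Y_j}\le \sqrt{\Bigl(\sum_j X_j\Bigr)\Bigl(\sum_j \sigma_j^2 Y_j\Bigr)},
\]
and bound each term on the left via \eqref{e5}, namely $\sqrt{X_j Y_j}\ge \tfrac{K_S}{4\pi}\norm{f}_2^2$. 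Summing over $j$ and invoking $\sum_j\sigma_j=\mathrm{Tr}((B^TB)^{1/2})$ delivers \eqref{e6} at once.

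For sharpness, I would construct an explicit extremizer by prescribing $g_{\xi_2}$ in Corollary~\ref{cor45} as a tensor product of Gaussians adapted to the singular directions. Specifically, for any $\Theta\in L^2(D^TA(\ker B))$ and any $c>0$, I would define $f$ by transporting, through \eqref{gu}, the function
\[
g_{\xi_2}(u)=\zeta(\xi_2)\prod_{j=1}^r\exp\!\bigl(2\pi i\beta^{(j)}u_j-c\,\sigma_j^{-1}(u_j-\alpha^{(j)})^2\bigr),
\]
with $\zeta\circ D^T=\Theta$ as in the proof of Theorem~\ref{dirHeis}. Since each factor is a one-dimensional Gaussian in $u_j$, this $f$ has the form \eqref{flimitej} for every $j=1,\dots,r$ simultaneously (the auxiliary factor $G_j$ there is itself a product of Gaussians in the remaining coordinates), so each directional inequality is already an equality. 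The specific tuning $\gamma_j=c\,\sigma_j^{-1}$ is exactly the one that equalizes the Cauchy--Schwarz step: computing moments of a one-dimensional Gaussian of width $\gamma_j$ gives $X_j\propto 1/\gamma_j$ and $Y_j\propto\gamma_j$ (the proportionality constants being the same geometric factors already isolated in the proof of Theorem~\ref{dirHeis}), whence $X_j/(\sigma_j^2 Y_j)\propto 1/(\sigma_j\gamma_j)^2$ is independent of $j$ precisely for this choice.

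The main obstacle I anticipate is the sharpness part: the inequality itself is essentially a bookkeeping exercise once the singular-vector basis is chosen, but verifying that a single function saturates all $r$ directional estimates and the Cauchy--Schwarz step simultaneously is more subtle. The decisive point is that \eqref{flimitej} allows the free factor $G_j$ to depend arbitrarily on the remaining $r-1$ coordinates of $x_1$, so that a single fully tensor-product Gaussian automatically extremizes every direction; only the relative widths are then constrained, and the condition $\sigma_j\gamma_j=\mathrm{const}$ emerging from Cauchy--Schwarz equality fits perfectly into this freedom.
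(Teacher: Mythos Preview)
Your proposal is correct and follows essentially the same approach as the paper: choose the singular-vector basis of $\ker(B)^\perp$ so that $\{Bv_j\}$ is orthogonal with $|Bv_j|=\sigma_j$, expand both quadratic weights accordingly, apply Cauchy--Schwarz to the sequences $\sqrt{X_j}$ and $\sigma_j\sqrt{Y_j}$, and then invoke the directional inequality \eqref{e5} termwise. For sharpness, the paper also builds a tensor-product Gaussian extremizer and arrives at the same compatibility condition $\sigma_j\gamma_j=\mathrm{const}$ from the Cauchy--Schwarz equality case; your identification of this constraint via $X_j\propto 1/\gamma_j$, $Y_j\propto\gamma_j$ matches their explicit computation.
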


\begin{proof}
    We chose the orthonormal basis of $\ker(B)^\perp$ given by the eigenvectors $\{v_1,\dots,v_r\}$ of $B^TB_{|\ker(B)^\perp}$ with corresponding eigenvalues $0<\lambda_1=\sigma_1(B)^2\leq\dots\leq\lambda_r=\sigma_r(B)^2$. In this way we have the basis $\{Bv_1,\dots,Bv_r\}$ on $R(B)$ which is orthogonal. In fact,
    \begin{equation*}
         Bv_i\cdot Bv_j=B^TBv_i\cdot v_j=\lambda_i v_i\cdot v_j=\lambda_i\delta_{ij}.
    \end{equation*}
   Furthermore,
   \begin{equation}\label{e7}
        |Bv_i|^2=Bv_i\cdot Bv_i=B^TBv_i\cdot v_i=\lambda_i.
    \end{equation}
   We rewrite the left-hand side of \eqref{e6} by expanding $(x_1-\alpha)$ in the basis $\{v_1,\dots,v_r\}$ and $(\xi_1-\beta)$ in the basis $\{Bv_1,\dots,Bv_r\}$. The orthogonality of the bases yields\begin{equation}
    \int_{\bR^d}|\displaystyle\sum_{i=1}^r(x_1^{(i)}-\alpha^{(i)})v_i|^2|f(x)|^2dx=\sum_{i=1}^r\int_{\bR^d}|(x_1^{(i)}-\alpha^{(i)})|^2 |f(x)|^2dx\label{e80}
    \end{equation}
and
\begin{equation}
       \int_{\bR^d}|\displaystyle\sum_{j=1}^r(\xi_1^{(j)}-\beta^{(j)})Bv_j|^2|\widehat Sf(\xi)|^2d\xi= \label{e8}\sum_{j=1}^r\sigma_j(B)^2\int_{\bR^d}|(\xi_1^{(j)}-\beta^{(j)})|^2|\widehat Sf(\xi)|^2d\xi.
    \end{equation}
   We define \begin{equation}\label{eq:h-k}
   	k_i:=\left(\int_{\bR^d}|(x_1^{(i)}-\alpha^{(i)})|^2 |f(x)|^2dx\right)^{1/2},\quad h_j:=\sigma_j(B)\left(\int_{\bR^d}|(\xi_1^{(j)}-\beta^{(j)})|^2|\widehat Sf(\xi)|^2d\xi\right)^{1/2},
   \end{equation}  
   $k:=(k_i)_{i=1}^d$, and $h:=(h_i)_{i=1}^d\in\bC^d$.
   Observe that \eqref{e80} and \eqref{e8} are $\|k\|_2^2$ and $\|h\|_2^2$, respectively. The Cauchy-Schwarz inequality in $\bC^d$:   $ \|k\|_2 \|h\|_2\geq |k\cdot h|$,
    yields
   \begin{align*}
        &\left(\displaystyle\sum_{i=1}^r\int_{\bR^d}|(x_1^{(i)}-\alpha^{(i)})|^2|f(x)|^2d\xi\right)^{1/2}\left(\sum_{j=1}^r\sigma_j(B)^2\int_{\bR^d}|(\xi_1^{(j)}-\beta^{(j)})|^2|\widehat Sf(\xi)|^2d\xi\right)^{1/2}\\
        &\qquad\geq\quad\displaystyle\sum_{j=1}^r\sigma_j(B)\bigg(\int_{\bR^d}|(x_1^{(j)}-\alpha^{(j)})|^2 |f(x)|^2dx\bigg)^{1/2}\bigg(\int_{\bR^d}|\xi_1^{(j)}-\beta^{(j)}|^2|\widehat Sf(\xi)|^2d\xi\bigg)^{1/2}\\
        & \qquad\geq\quad\frac{\displaystyle\sum_{j=1}^r \sigma_j(B)}{4\pi}K_S\norm{f}_2^2\\
        & \qquad =\quad \frac{\mathrm{Tr}((B^TB)^{1/2})}{4\pi}K_S\norm{f}_2^2,
    \end{align*}
    where the  last inequality follows from \eqref{e5}. By Theorem  \ref{dirHeis}, the last estimate is an equality if and only if the function $f$ takes the form \eqref{flimitej}, 
    for every $j=1,\dots,r$. This necessarily implies that:\begin{equation*}
        G_j=L\cdot \exp(2\pi i{\displaystyle\sum_{i\ne j}\beta^{(i)}x_1^{(i)}})\exp(-\gamma_j\displaystyle\sum_{i\ne j}(x_1^{(i)}-\alpha^{(i)})^2),\quad \forall j=1,\dots,r,
    \end{equation*}
    and for some $L\in\bC$. So, up to renaming of $\Theta (x_2)$, where $\Theta \in L^2(D^TA(\ker B))$, we obtain the expression:\begin{equation*}
        f(x_1+x_2)=\Theta (x_2)\exp({2\pi i \displaystyle\sum_{j=1}^r\beta^{(j)} x_1^{(j)}}-\gamma_j|x_1-\alpha|^2-i\pi (B^+Ax_1\cdot x_1-2C^TD^{-T}x_2\cdot x_1)),
    \end{equation*}
    or, equivalently,
    \begin{equation}
    	f(x_1+D^T\xi_2)
    	= \zeta(\xi_2)\exp({2\pi i \displaystyle\sum_{j=1}^r\beta^{(j)} x_1^{(j)}}-\gamma_j|x_1-\alpha|^2-i\pi (B^+Ax_1\cdot x_1-2C^T\xi_2\cdot x_1)),
    \end{equation}
 for some $\gamma_j>0$ (possibly depending on $j$), and $\zeta \in L^2(A(\ker B))$.   
   
The inequality\begin{align*}
\|k\|_2 \|h\|_2\geq | k\cdot h|
\end{align*}
becomes an equality if and only if $\exists\ c\in\bC$ such that $k=ch$. This gives \begin{align*}
    \int_{\bR^d}|(x_1^{(j)}-\alpha^{(j)})|^2 |f(x)|^2dx=c^2\sigma_j(B)^2\left(\int_{\bR^d}|(\xi_1^{(j)}-\beta^{(j)})|^2|\widehat Sf(\xi)|^2d\xi\right),\quad \forall j=1,\dots,r.
\end{align*}
The equivalent condition for $g_{\xi_2}$ defined in \eqref{gu} becomes \begin{equation}\label{cond1}
	\int_{A(\ker(B))}\int_{\bR^r}|u_j-\alpha^{(j)}|^2 | g_{\xi_2}(u)|^2dud\xi_2=c'\cdot \int_{A(\ker(B))}\int_{\bR^r}|\eta_j-\beta^{(j)}|^2|\widehat  g_{\xi_2}(\eta)|^2d\eta d\xi_2,
\end{equation}
where $c'=c^2\sigma(B)\mu_S^2\sigma_j(B)^2$. Now, the function takes the form
\[
g_{\xi_2}(u)
= \zeta(\xi_2)\,L\,
e^{2\pi i \beta\cdot u}\,
\exp\!\Big(-\sum_{j=1}^r \gamma_j (u_j-\alpha^{(j)})^2\Big),
\qquad u\in\mathbb R^r,
\]
for a suitable constant $L\in\bC$.

Since $|\zeta (\xi_2)|^2$ factors out from both sides of \eqref{cond1}, condition \eqref{cond1}
reduces to an identity in $\R^r$:
\[
\int_{\R^r} |u_j-\alpha^{(j)}|^2 \, |G(u)|^2\,du
=
c^2 \sigma(B)\mu_S^2 \sigma_j(B)^2
\int_{\R^r} |\eta_j-\beta^{(j)}|^2 \, |\widehat G(\eta)|^2\,d\eta,
\qquad j=1,\dots,r,
\]
where
\[
G(u)=e^{2\pi i \beta\cdot u}\exp\!\Big(-\sum_{j=1}^r \gamma_j (u_j-\alpha^{(j)})^2\Big).
\]

By separability of $G(u)$ and the one-dimensional Gaussian computations, we have for each $j$:
\[
\int_{\R^r} |u_j-\alpha^{(j)}|^2 |G(u)|^2\,du
=\frac{1}{4\gamma_j}\,\|G\|_2^2,
\qquad
\int_{\R^r} |\eta_j-\beta^{(j)}|^2 |\widehat G(\eta)|^2\,d\eta
=\frac{\gamma_j}{4\pi^2}\,\|G\|_2^2.
\]
Hence \eqref{cond1} holds if and only if
\[
\frac{1}{4\gamma_j}\|G\|_2^2
=
c^2 \sigma(B)\mu_S^2 \sigma_j(B)^2 \frac{\gamma_j}{4\pi^2}\|G\|_2^2
\quad\Longleftrightarrow\quad
\gamma_j
=
\frac{\pi}{c\,\sigma_j(B)\sqrt{\sigma(B)}\,\mu_S},
\qquad j=1,\dots,r.
\]
Therefore, choosing $\gamma_j$ as above makes the Cauchy-Schwarz step in the proof  an equality, and the constant in \eqref{e6} is sharp.
\end{proof}
\begin{remark}[Sharpness and structure of extremizers]
	Observe that the extremizers of Theorem~\ref{Teo2} are in general
	\emph{anisotropic} Gaussians, whose anisotropy reflects the singular spectrum
	of the block $B$ of the underlying symplectic matrix.
	The isotropic Gaussian case occurs only when $\sigma_1(B)=\cdots=\sigma_r(B)$,
	which includes, as a special case, the Fourier transform, detailed below.
\end{remark}

\begin{example}[Fourier transform]
	Let
	\[
	J=
	\begin{pmatrix}
		0 & I \\
		- I & 0
	\end{pmatrix},
	\]
    that is the case where $A=0$, $B=I$, $C=-I$ and $D=0$. Then, $r=\operatorname{rank}(B)=d$ and $\sigma_j(B)=1$ for every $j$.
	By definition, $$q_{\ker(B)}(D^TA)=1.$$
	The metaplectic operator associated with $J$ is (up to a phase) the Fourier transform.
	
	The inequality \eqref{e6} becomes
	\[
	\Big(\int_{\mathbb R^d} |x-\alpha|^2 |f(x)|^2\,dx\Big)^{1/2}
	\Big(\int_{\mathbb R^d} |\xi-\beta|^2 |\widehat f(\xi)|^2\,d\xi\Big)^{1/2}
	\ge
	\frac{d}{4\pi}\,\|f\|_2^2,
	\]
	which coincides with the classical Heisenberg--Pauli--Weyl inequality. Equality holds for the Gaussian
	\[
	f(x)=e^{2\pi i\,\beta\cdot x}\,e^{-\pi\gamma |x-\alpha|^2},
	\qquad \gamma>0.
	\]
	In fact, one computes
	\[
	\int |x-\alpha|^2 |f(x)|^2\,dx
	= \frac{d}{4\pi\gamma}\,\|f\|_2^2,
	\qquad
	\int |\xi-\beta|^2 |\widehat f(\xi)|^2\,d\xi
	= \frac{d\gamma}{4\pi}\,\|f\|_2^2,
	\]
	so that
	\[
	\|\,|x-\alpha|f\|_2\,\|\,|\xi-\beta|\widehat f\|_2
	=
	\frac{d}{4\pi}\,\|f\|_2^2.
	\]
	Hence, the constant in Theorem~\ref{Teo2} is sharp and recovers the classical case.
\end{example}

\begin{example}\label{ex4.8}
    The Fourier multiplier
    \begin{equation}
        \mathfrak{m}_Pf(x)=\cF^{-1}(e^{-i\pi Py\cdot y}\widehat f)(x), \qquad P\in\bR^{d\times d},\, P=P^\top
    \end{equation}
    is a metaplectic operator and its projection is
    \begin{equation}
        V_{P}^T=\begin{pmatrix}
            I_d & P\\
            0_d & I_d
        \end{pmatrix}.
    \end{equation}
    In this case, $K_S=1$ and equation \eqref{e6} reads as
    \begin{equation}
        \left(\int_{\bR^d}|x_1-\alpha|^2 |f(x)|^2dx\right)^{1/2}\left(\int_{\bR^d}|\xi_1-\beta|^2|\mathfrak{m}_Pf(\xi)|^2d\xi\right)^{1/2}\geq\frac{\mathrm{Tr}((P^TP)^{1/2})}{4\pi}\norm{f}_2^2.
    \end{equation}
    As a concrete example, let us apply our result to the Schr\"odinger equation of the free particle. Consider the Cauchy problem
    \begin{equation}\label{Schro1}
        \begin{cases}
        i\partial_tu(t,x)=\Delta_xu(t,x) & \text{$t\in\bR$, $x\in\rd$},\\
        u(0,x)=u_0(x),
        \end{cases}
    \end{equation}
    with $u_0\in\cS(\rd)$. The propagator $e^{-it\Delta_x}$ of \eqref{Schro1} is the one-parameter subgroup of Fourier multipliers
    \begin{equation}
        e^{-it\Delta_x}u_0=\mathfrak{m}_{-4\pi t I_d}u_0=\cF^{-1}(e^{i\pi (4\pi t)|\cdot|^2}\widehat u_0),
    \end{equation}
    having projection given for every $t\in\bR$ by
    \begin{equation}
        V^\top_{-4\pi t I_d}=\begin{pmatrix}
            I_d & -4\pi t I_d\\
            0_d & I_d
        \end{pmatrix}.
    \end{equation}
    Since $\sigma_j(B)=4\pi|t|$ for every $j=1,\ldots,d$, Heisenberg's inequality reads in this case as
    \begin{equation}
        \left(\int_{\bR^d}|x-\alpha|^2 |u_0(t,x)|^2dx\right)^{1/2}\left(\int_{\bR^d}|\xi-\beta|^2|e^{-it\Delta_x}u_0(t,\xi)|^2d\xi\right)^{1/2}\geq d|t|\norm{u_0}_2^2,
    \end{equation}
    this inequality integrates both the case where the uncertainty principle holds, i.e., $t\neq0$, and the case where it fails, $t=0$.
\end{example}

\begin{example}
    Let $\cJ\subseteq\{1,\ldots,d\}$ be a non-empty subset of indices. The metaplectic operator, defined up to a phase factor, having projection
    \begin{equation}
        \Pi_\cJ=\begin{pmatrix}
            I_d-I_{\cJ} & I_\cJ\\
            -I_\cJ & I_d-I_\cJ
        \end{pmatrix}, 
    \end{equation}
    where $I_\cJ$ is the diagonal matrix with $j$-th diagonal index $=1$ if $j\in\cJ$ and $=0$ otherwise, is the so-called partial Fourier transform with respect to the variables indexed by $\cJ$, expressed as
    \begin{equation}
        \cF_{\cJ}f(\xi_1+\xi_2)=\int_{\bR^r} f(x_1+\xi_2)e^{-2\pi i x_1\xi_2}dx_1, \qquad f\in\cS(\rd),
    \end{equation}
    where $r=\mathrm{card}(\cJ)$, while $x_1=I_{\cJ}x$, $x_2=x-x_1$ and similarly for $\xi_1,\xi_2$. Heisenberg's inequality in this case reads as
    \begin{equation}
        \Big(\int_{\mathbb R^d} |x_1-\alpha|^2 |f(x)|^2\,dx\Big)^{1/2}
	\Big(\int_{\mathbb R^d} |\xi_1-\beta|^2 |\cF_\cJ f(\xi)|^2\,d\xi\Big)^{1/2}
	\ge
	\frac{r}{4\pi}\,\|f\|_2^2.
    \end{equation}
\end{example}

\begin{example}\label{ex4.10}
    Consider the Quantum Harmonic Oscillator
    \begin{equation}
        \begin{cases}
            i\partial_tu(t,x)=\widehat Hu(t,x), & \text{if $t\in\bR$, $x\in\rd$},\\
            u(0,x)=u_0(x),
        \end{cases}
    \end{equation}
    for $u\in\cS(\rd)$ and 
    \begin{equation}
        \widehat{H}=-\dfrac{1}{\pi}\Delta_x+\pi\omega^2|x|^2, \qquad \omega>0.
    \end{equation} 
    The propagator $e^{-itH}$ is the one-parameter subgroup of $\Mp(d,\bR)$ with projections $S_t=\pi^{Mp}(e^{-itH})$ given by
    \begin{equation}\label{defStHO}
        S_t=\begin{pmatrix}
            \cos(\omega t)I_d & \frac 1 \omega \sin(\omega t)I_d\\
            -\omega\sin(\omega t) I_d & \cos(\omega t)I_d
        \end{pmatrix},
    \end{equation}
    that is, $e^{-itH}$ is the $\omega t$-th fractional power of the Fourier transform, explicitly given up to a phase by
    \begin{equation}
        e^{-itH}u_0(x)=
        \begin{cases}
            (1-i\cot(\omega t))^{d/2}\int_{\rd}u_0(y)e^{i\pi\cot(\omega t)(|x|^2+|y|^2)}e^{-2\pi ixy/\sin(\omega t)}dy & \text{if $\sin(\omega t)\neq0$},\\
            u_0((-1)^kx) & \text{if $t=\frac{k\pi}{\omega}$, $k\in\bZ$}.
        \end{cases}
    \end{equation}
    Clearly, the upper-right block in \eqref{defStHO} does not vanish only in the first scenario, where Heisenberg's inequality reads as
    \begin{equation}\label{UPHerm}
        \Big(\int_{\mathbb R^d} |x-\alpha|^2 |u_0(x)|^2\,dx\Big)^{1/2}
	\Big(\int_{\mathbb R^d} |\xi-\beta|^2 |e^{-itH} u_0(\xi)|^2\,d\xi\Big)^{1/2}
	\ge
	\frac{d|\sin(\omega t)|}{4\omega\pi}\,\|u_0\|_2^2,
    \end{equation}
    since the singular values of the upper-right block are all equal to $|\sin(\omega t)|/\omega$. Observe that \eqref{UPHerm} also includes the case where $\sin(\omega t)=0$ and the uncertainty principle fails.
\end{example}

We conclude this section by commenting on a remarkable directional version of Heisenberg's UP proven by Dias, de Gosson and Prata in \cite{CGP2024}. 
Specifically, we refer to Theorem 3 therein, that we rephrase as follows.
\begin{theorem}
    Let $\widehat S\in\Mp(d,\bR)$ have projection $S\in\Sp(d,\bR)$ with blocks \eqref{blockS}. Then, for every $j,k=1,\ldots,d$ and every $\alpha,\beta\in\bR$, 
    \begin{equation}\label{GosH}
        \Big(\int_{\rd}(x_j-\alpha)^2|f(x)|^2dx\Big)^{1/2}\Big(\int_{\rd}(\xi_k-\beta)^2|\widehat Sf(\xi)|^2d\xi\Big)^{1/2}\geq\frac{|B_{kj}|}{4\pi}\norm{f}_2^2,
    \end{equation}
    where $B_{k,j}$ is the $(j,k)$-entry of $B$.
\end{theorem}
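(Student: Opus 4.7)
The plan is to reduce the statement to a Robertson-type commutator bound between two self-adjoint operators on $L^2(\rd)$, using the symplectic covariance of the Weyl quantization. Denote by $X_\ell$ the operator of multiplication by the $\ell$-th coordinate on $L^2(\rd)$, and set
$$
\Xi_k := \widehat S^{-1}\, X_k\, \widehat S.
$$
Since $\widehat S$ is unitary,
$$
\Big(\int_{\rd}(\xi_k-\beta)^2|\widehat S f(\xi)|^2\, d\xi\Big)^{1/2}
= \|(X_k-\beta)\widehat S f\|_2
= \|(\Xi_k - \beta)f\|_2,
$$
so that \eqref{GosH} becomes equivalent to the inequality
$$
\|(X_j - \alpha)f\|_2 \, \|(\Xi_k - \beta)f\|_2 \,\ge\, \frac{|B_{kj}|}{4\pi}\,\|f\|_2^2 .
$$

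The key ingredient is the symplectic covariance of the Weyl quantization: for every linear symbol $a$,
$$
\widehat S^{-1}\, \mathrm{op}^w(a)\, \widehat S = \mathrm{op}^w(a\circ S).
$$
Applied to $a(x,\xi) = x_k$, and using $\mathrm{op}^w(x_\ell)=X_\ell$ together with $\mathrm{op}^w(\xi_\ell)=D_\ell:=\tfrac{1}{2\pi i}\partial_\ell$, this yields the linear expression
$$
\Xi_k = \sum_{\ell=1}^d \bigl(A_{k\ell}\, X_\ell + B_{k\ell}\, D_\ell\bigr).
$$
Using the canonical commutation relations $[X_j,X_\ell]=0$ and $[X_j,D_\ell]=\tfrac{i}{2\pi}\delta_{j\ell}\,I$, a direct computation gives
$$
[X_j,\Xi_k] \,=\, \frac{i\, B_{kj}}{2\pi}\, I ,
$$
so that $|\langle [X_j,\Xi_k]f,f\rangle| = \tfrac{|B_{kj}|}{2\pi}\,\|f\|_2^2$. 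Applying Robertson's inequality
$$
\|(P-\alpha)f\|_2\,\|(Q-\beta)f\|_2 \,\ge\, \tfrac{1}{2}\,|\langle [P,Q]f,f\rangle| ,
$$
valid for every pair of self-adjoint operators $P,Q$ and every $f$ in a common invariant dense core (for instance $\cS(\rd)$), immediately produces \eqref{GosH}; the bound then extends to arbitrary $f\in L^2(\rd)$ with finite left-hand side by a standard density argument.

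The main obstacle I anticipate is the careful justification of the symplectic covariance identity for a generic $\widehat S\in\Mp(d,\bR)$, including the degenerate case $\ker B\neq\{0\}$, together with the essential self-adjointness of $\Xi_k$ on a dense core. This can either be invoked from the general theory of Weyl pseudodifferential operators and their metaplectic covariance (as developed by Folland and de Gosson), or else verified directly on the generators of $\Mp(d,\bR)$ listed in Example~\ref{exMetap} (the Fourier transform $\cF$, chirp multipliers $\mathfrak p_Q$, rescalings $\mathfrak T_E$, and Fourier multipliers $\mathfrak m_P$) and then extended to arbitrary products by a telescoping argument. Once this piece is in place, the rest of the argument is essentially one line of linear algebra followed by Robertson's inequality.
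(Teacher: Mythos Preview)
Your argument is correct. The reduction to Robertson's inequality via metaplectic covariance of the Weyl calculus is clean, and the commutator computation $[X_j,\Xi_k]=\tfrac{i B_{kj}}{2\pi}I$ is right (one can sanity-check it on $\widehat S=\cF$, where $A=0$, $B=I$ gives $\Xi_k=D_k$ as expected). The domain caveats you flag are the only genuine technical points, and working on $\cS(\rd)$ followed by density is the standard way to handle them.

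Note, however, that the paper does \emph{not} prove this theorem: it is quoted from Dias, de~Gosson and Prata \cite{CGP2024} (``Theorem~3 therein'') and only discussed for comparison with the paper's own anisotropic inequality~\eqref{e5}. So there is no ``paper's own proof'' to compare against. That said, your approach via symplectic covariance and the Robertson bound is exactly the natural route and is, in essence, how the result is obtained in \cite{CGP2024}; it is also conceptually distinct from the method the present paper develops for its own results (Theorems~\ref{dirHeis} and~\ref{Teo2}), which proceeds by slicing along $\ker(B)^\perp$ and reducing to the classical Fourier uncertainty principle on $\bR^r$ via the auxiliary functions $g_{\xi_2}$.
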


A comparison between \eqref{e5} and \eqref{GosH} is therefore in order. While the two inequalities are formally very similar, they differ in how the directional information associated with the metaplectic operator is distributed.
Inequality \eqref{e5} encodes this directional information in its left-hand side. This is a consequence of our choice of coordinates, which is coherent with the phase-space geometry induced by the metaplectic transformation and by its effective directions. Accordingly, the constant on the right-hand side does not depend on the index $j$.
In \eqref{GosH}, on the contrary, the relevant directional information is entirely contained in the right-hand side, through the factor $|B_{k j}|$, since the coordinates in the left-hand side are the Cartesian coordinates, and they are not chosen according to $\widehat S$.
Besides being more general, as the indices $j$ and $k$ need not coincide, this formulation reveals that the entries of $B$ measure the correlation between the Cartesian variables of $f$ and those of $\widehat S f$. It is easy to see that \eqref{e5} can be rephrased verbatim when replacing $\xi_1^{(j)}$ with $\xi_1^{(k)}$, $k\neq j$, but the functions optimizing the inequality have to be modified accordingly.

\section{Beurling and Morgan-type UPs}
In this Section we generalize the Beurling's UP of Theorem \ref{BeurlingFT} and the Morgan one of Theorem \ref{classicMorgan}. We start with the Beurling-type result.
\begin{theorem}\label{Thrm4.1}
	Let $\widehat S$ be a metaplectic operator on $L^2(\rd)$ with projection \eqref{blockS} such that  $1\leq r= \mathrm{rank}(B)\leq d$. Assume $x=x_1+x_2$ as in \eqref{D1}, then we have
		\begin{equation}\label{beurlingmetap}
		\int_{\ker(B)^\perp}\int_{\ker(B)^\perp}
		\frac{|f(x_1+x_2)||\widehat Sf(B\xi_1+\xi_2)|}{(1+|x_1|+|\xi_1|)^N}e^{2\pi(|x_1\xi_1|)}dx_1d\xi_1<\infty
	\end{equation}
for  $x_2=D^T\xi_2$,  $\xi_2\in A(\ker(B))$, and every $f\in\lrd$,
if and only if $$f(x)=f(x_1+x_2)=\widetilde q(x_1)e^{-\pi\widetilde M x_1\cdot x_1}e^{-i\pi (B^+Ax_1\cdot x_1-2C^T\xi_2\cdot x_1)},$$
where  $\widetilde q$ is a polynomial of degree $<(N-r)/2$ on $\ker(B)^\perp$ and $\widetilde M$ is a real positive definite symmetric matrix on $\ker(B)^\perp$.
\end{theorem}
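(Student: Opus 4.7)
The plan is to reduce the statement to the classical Beurling-Hörmander theorem (Theorem \ref{BeurlingFT}) applied in dimension $r$, via the auxiliary function $g_{\xi_2}$ from Corollary \ref{cor45}. Let $V:\bR^r\to\ker(B)^\perp$ be the isometric parametrization with $V^TV=I_r$, and recall
\begin{equation*}
g_{\xi_2}(u)=f(Vu+D^T\xi_2)\,e^{i\pi(V^TB^+AVu\cdot u-2V^TC^T\xi_2\cdot u)},\qquad u\in\bR^r.
\end{equation*}
By Corollary \ref{cor45}, $|g_{\xi_2}(u)|=|f(Vu+D^T\xi_2)|$ and $|\widehat g_{\xi_2}(\eta)|=\mu_S^{-1}|\widehat Sf(BV\eta+\xi_2)|$, and $g_{\xi_2}\in L^2(\bR^r)$ by a Fubini argument.

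First, I would perform the linear changes of variables $x_1=Vu$ and $\xi_1=V\eta$ in \eqref{beurlingmetap}. Since $V$ is an isometry, $|x_1|=|u|$, $|\xi_1|=|\eta|$, $x_1\cdot\xi_1=V^TVu\cdot\eta=u\cdot\eta$, and the measures on $\ker(B)^\perp$ and $\bR^r$ are interchanged without Jacobian. Thus \eqref{beurlingmetap} is finite if and only if
\begin{equation*}
\int_{\bR^r}\int_{\bR^r}\frac{|g_{\xi_2}(u)||\widehat g_{\xi_2}(\eta)|}{(1+|u|+|\eta|)^N}e^{2\pi|u\cdot\eta|}\,du\,d\eta<\infty,
\end{equation*}
and by Theorem \ref{BeurlingFT} in dimension $r$, this is equivalent to
\begin{equation*}
g_{\xi_2}(u)=q(u)\,e^{-\pi Mu\cdot u},
\end{equation*}
with $q$ a polynomial on $\bR^r$ of degree $<(N-r)/2$ and $M$ a real positive definite symmetric matrix on $\bR^r$.

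Next, I would invert the definition of $g_{\xi_2}$. Setting $x_1=Vu\in\ker(B)^\perp$ and using that $VV^T$ acts as the identity on $\ker(B)^\perp$, adjointness gives
\begin{equation*}
V^TB^+AVu\cdot u=B^+Ax_1\cdot x_1,\qquad V^TC^T\xi_2\cdot u=C^T\xi_2\cdot x_1,
\end{equation*}
while $\widetilde q(x_1):=q(V^Tx_1)$ is a polynomial on $\ker(B)^\perp$ of the same degree, and $MV^Tx_1\cdot V^Tx_1=(VMV^T)x_1\cdot x_1=:\widetilde Mx_1\cdot x_1$ defines a real symmetric positive definite quadratic form on $\ker(B)^\perp$, since $V^T|_{\ker(B)^\perp}:\ker(B)^\perp\to\bR^r$ is a linear isomorphism. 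Substituting back yields exactly the representation claimed for $f(x_1+D^T\xi_2)$.

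For the converse, I would argue by direct substitution: if $f$ has the stated form, then $g_{\xi_2}(u)=q(u)e^{-\pi Mu\cdot u}$, and the ``if'' direction of Theorem \ref{BeurlingFT} ensures the convergence of the transformed integral, hence of \eqref{beurlingmetap}. I do not foresee a serious obstacle in this argument; the only delicate point is the bookkeeping in the algebraic identifications between quantities on $\ker(B)^\perp$ and on $\bR^r$, which is routine given the identities $V^TV=I_r$ and $VV^T=\mathrm{id}$ on $\ker(B)^\perp$.
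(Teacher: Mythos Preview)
Your proposal is correct and follows essentially the same approach as the paper: both reduce \eqref{beurlingmetap} to the classical Beurling--H\"ormander theorem on $\bR^r$ for the auxiliary function $g_{\xi_2}$, then invert the definition of $g_{\xi_2}$ via the isometry $V$ to obtain the stated form of $f$. Your single change of variables $(x_1,\xi_1)=(Vu,V\eta)$ is in fact slightly more direct than the paper's, which passes through $R(B)$ via $\xi_1=BV\eta$ and then back to $\ker(B)^\perp$ via $\xi_1'=B^+\xi_1$, but the substance of the argument is identical.
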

\begin{proof}
	Fix $\xi_2\in A(\ker(B))$. Let $ g_{\xi_2}$ be defined as in \eqref{gu}, so that $|\widehat Sf|$ and $|\widehat  g_{\xi_2}|$ are related according to \eqref{relSfhatg}. The Beurling condition \eqref{beurlingcondition} for $ g_{\xi_2}$ gives
	\begin{align*}
I:=&\int_{\bR^{2r}}\frac{| g_{\xi_2}(u)\widehat  g_{\xi_2}(\eta)|}{(1+|u|+|\eta|)^N}e^{2\pi(|u\eta|)}dud\eta=\frac{1}{\mu_S}\int_{\bR^{2r}}\frac{|f(Vu+D^T\xi_2)||\widehat Sf(BV\eta+\xi_2)|}{(1+|u|+|\eta|)^N}e^{2\pi(|u\eta|)}dud\eta\\
		&=\frac{1}{\sigma(B)\mu_S}\int_{\ker(B)^\perp}\int_{R(B)}\frac{|f(x_1+D^T\xi_2)||\widehat Sf(\xi_1+\xi_2)|}{(1+|V^Tx_1|+|V^TB^+\xi_1|)^N}e^{2\pi(|V^Tx_1V^TB^+\xi_1|)}dx_1d\xi_1\\
		&=\frac{1}{\sigma(B)\mu_S}\int_{\ker(B)^\perp}\int_{R(B)}\frac{|f(x_1+D^T\xi_2)||\widehat Sf(\xi_1+\xi_2)|}{(1+|x_1|+|B^+\xi_1|)^N}e^{2\pi(|x_1B^+\xi_1|)}dx_1d\xi_1\\
		&=\frac{1}{\mu_S}\int_{\ker(B)^\perp}\int_{\ker(B)^\perp}\frac{|f(x_1+D^T\xi_2)||\widehat Sf(B\xi'_1+\xi_2)|}{(1+|x_1|+|\xi'_1|)^N}e^{2\pi(|x_1\xi_1'|)}dx_1d\xi'_1\\
	\end{align*}
where we set $\xi'_1= B^+\xi_1\in \ker(B)^\perp$, $d\xi_1=\sigma(B)d\xi'_1$.
By  \eqref{beurlingcondition} $I<\infty$ if and only if
$ g_{\xi_2}(u)=q(u)e^{-\pi Mu\cdot u}$, that is
$$q(u)e^{-\pi Mu\cdot u}=f(Vu+D^T\xi_2)e^{i\pi (V^TB^+AVu\cdot u-2V^TC^T\xi_2\cdot u)}, \qquad u\in\bR^r,\, \,\xi_2\in  A(\ker(B))$$
i.e.,
$$q(V^Tx_1)e^{-\pi MV^Tx_1\cdot V^Tx_1}=f(VV^Tx_1+D^T\xi_2)e^{i\pi (V^TB^+AVV^Tx_1\cdot V^Tx_1-2V^TC^T\xi_2\cdot V^Tx_1)},$$
that is
$$f(x_1+D^T\xi_2)=q(V^Tx_1)e^{-\pi VMV^Tx_1\cdot x_1}e^{-i\pi (B^+Ax_1\cdot x_1-2C^T\xi_2\cdot x_1)},$$
$\xi_2\in  A(\ker(B)).$
We set $D^T\xi_2=x_2$, $x_2\in D^TA(\ker(B))$ and we obtain the thesis.
\end{proof}


Next, we extend Morgan's UP to metaplectic operators. 
 Consider a metaplectic operator $\widehat S$ on $L^2(\rd)$ with projection \eqref{blockS}, such that  $1\leq r= \mathrm{rank}(B)\leq d$. We fix an arbitrary orthonormal basis $\{v_1,\dots, v_r\}$ of $\ker B^{\perp}$ and we denote by $w^{(j)}$ the $j$-th component of an arbitrary vector $w\in\ker B^{\perp}$ with respect to that basis, while, if $y\in\bR^r$, we denote by $y_j$, the $j$-th component of $y$ with respect to the canonical basis of $\bR^r$. Moreover we consider again the two decompositions of $\rd$ given by \eqref{D1} and \eqref{D2}.
      
      \begin{theorem}\label{5.1} Let $1<p<2$ and let $q$ be the conjugate exponent. Assume that $f\in L^2(\rd)$ satisfies the two conditions:\begin{align}\label{eq:cond1}
        &\int_{\ker B^{\perp}} |f(x_1+x_2)|e^{2\pi a^p/p|x_1^{(j)}|}dx_1<\i\quad for\ a.e.\ x_2\in D^T(A(\ker B)),\\ &\int_{\ker B^{\perp}} |\widehat S f(B\zeta_1+\xi_2)|e^{2\pi b^q/q|\zeta_1^{(j)}|}d\zeta_1<\i\quad for\ a.e.\ \xi_2\in A(\ker B),\label{eq:cond2}
    \end{align}
    for some $j=1,\dots,r$ and for some $a,b>0$. Then,\begin{itemize}
        \item [i)]$ab>|\cos(p\pi/2)|^{1/p}\implies f=0.$
        \item [ii)]$ab<|\cos(p\pi/2)|^{1/p}\implies \exists $ a dense subset $D\subseteq \lrd$ of functions satisfying the above conditions for all $j=1,\dots,r$.
    \end{itemize}
\end{theorem}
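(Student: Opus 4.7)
The plan is to reduce the statement to the classical Morgan-type uncertainty principle of Theorem \ref{classicMorgan} on $\mathbb{R}^r$, applied to the auxiliary function $g_{\xi_2}$ from Corollary \ref{cor45}, in direct analogy with the reduction carried out for the Beurling-type result in Theorem \ref{Thrm4.1}. Fix an orthonormal basis $\{v_1,\dots,v_r\}$ of $\ker(B)^\perp$ and let $V\colon\mathbb{R}^r\to\ker(B)^\perp$ be the isometry sending the canonical basis $\{e_1,\dots,e_r\}$ to $\{v_1,\dots,v_r\}$, so that $V^TV=I_r$ and the $j$-th coordinate $u_j$ of $u\in\mathbb{R}^r$ coincides with $x_1^{(j)}$ when $x_1=Vu$. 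Since $|g_{\xi_2}(u)|=|f(Vu+D^T\xi_2)|$ by \eqref{gu} and $|\widehat{g_{\xi_2}}(\eta)|=\mu_S^{-1}|\widehat Sf(BV\eta+\xi_2)|$ by Corollary \ref{cor45}, the change of variables $x_1=Vu$ and $\zeta_1=V\eta$, both with unit Jacobian, translates hypotheses \eqref{eq:cond1} and \eqref{eq:cond2} verbatim into the classical Morgan hypotheses for $g_{\xi_2}$ on $\mathbb{R}^r$, holding for almost every $\xi_2\in A(\ker B)$.

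For part $(i)$, the assumption $ab>|\cos(p\pi/2)|^{1/p}$ combined with Theorem \ref{classicMorgan}$(i)$ forces $g_{\xi_2}\equiv 0$ for a.e.\ $\xi_2\in A(\ker B)$. Unwinding \eqref{gu} and using the direct-sum decomposition $\mathbb{R}^d=\ker(B)^\perp\oplus D^TA(\ker B)$ from \eqref{D1} (together with the fact that $D^T\colon A(\ker B)\to D^TA(\ker B)$ is an isomorphism by Lemma \ref{ele1}), one obtains $f(x_1+D^T\xi_2)=0$ for a.e.\ $(x_1,\xi_2)$, hence $f=0$ in $L^2(\mathbb{R}^d)$.

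For part $(ii)$, the strategy is to construct a dense family of admissible functions by pulling back tensor products through a unitary phase multiplication. Let $\mathcal{H}_1\subset L^2(\mathbb{R}^r)$ be the dense subset of Morgan-admissible functions for all $j=1,\dots,r$ provided by Theorem \ref{classicMorgan}$(ii)$ (with $d$ replaced by $r$), and let $\mathcal{H}_2\subset L^2(A(\ker B))$ be any dense subset, for instance $\mathcal{S}(A(\ker B))$. Given $\tilde h\in\mathcal{H}_1$ and $\Theta\in\mathcal{H}_2$, define
\[
f(x_1+D^T\xi_2)
=\tilde h(V^Tx_1)\,\Theta(\xi_2)\,
e^{-i\pi(B^+Ax_1\cdot x_1-2C^T\xi_2\cdot x_1)},
\qquad x_1\in\ker(B)^\perp,\ \xi_2\in A(\ker B),
\]
which is engineered precisely so that the associated auxiliary function reduces to $g_{\xi_2}(u)=\Theta(\xi_2)\tilde h(u)$. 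A direct check using the change of variables above, together with the identity $|\widehat Sf(B\zeta_1+\xi_2)|=\mu_S|\Theta(\xi_2)||\widehat{\tilde h}(V^T\zeta_1)|$ derived from Corollary \ref{cor45} and the relation $B^+B\zeta_1=\zeta_1$ for $\zeta_1\in\ker(B)^\perp$, shows that $f$ satisfies both \eqref{eq:cond1} and \eqref{eq:cond2} for every $j=1,\dots,r$. Since these conditions are stable under finite linear combinations by the triangle inequality, the $\mathbb{C}$-linear span of such $f$'s is admissible.

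Density of this span follows by recognizing the constructed $f$'s as the image, under a unitary phase multiplication on $L^2(\mathbb{R}^d)$, of the tensor products $h_1\otimes\Theta'$ with $h_1(x_1)=\tilde h(V^Tx_1)$ in $L^2(\ker(B)^\perp)$ and $\Theta'(x_2)=\Theta((D^T)^{-1}x_2)$ in $L^2(D^TA(\ker B))$. Since $\mathcal{H}_1$ and $\mathcal{H}_2$ are dense and the pullbacks remain dense, Corollary \ref{densita} yields density of the tensor-product span in $L^2(\mathbb{R}^d)$, and unitarity of the phase factor preserves density. The main technical subtlety lies in correctly separating the metaplectic phase, in particular the coupling term $C^T\xi_2\cdot x_1$, from the tensor-product structure and identifying it as a unitary multiplier; once this is done, Corollary \ref{densita} closes the argument.
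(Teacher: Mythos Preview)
Your proof is correct and follows essentially the same route as the paper: reduce both hypotheses to the classical Morgan conditions on $g_{\xi_2}\in L^2(\mathbb{R}^r)$ via Corollary \ref{cor45}, invoke Theorem \ref{classicMorgan} fiberwise for part (i), and for part (ii) build admissible functions as phase-twisted tensor products and appeal to Corollary \ref{densita}. Your treatment of the density step is in fact slightly more explicit than the paper's, since you separate out the unimodular factor $e^{-i\pi(B^+Ax_1\cdot x_1-2C^T\xi_2\cdot x_1)}$ as a unitary multiplier before applying Corollary \ref{densita}, whereas the paper simply cites the corollary directly.
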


\begin{proof}
    In what follows we shall use the parametrization $V:\ker B^{\perp}\to \R^{r}$ in Figure \ref{fig:1} and the  auxiliary function $g_{\xi_2}$  defined in \eqref{gu}.
   \par
     Condition \eqref{eq:cond1} can be rephrased as follows:  for almost every $x_2=D^T\xi_2\in D^T(A(\ker B))$,\begin{align*}
        \int_{\ker B^{\perp}} |f(x_1+x_2)|e^{2\pi a^p/p|x_1^{(j)}|}dx_1=&\int_{\bR^r} |f(Vu+x_2)|e^{2\pi a^p/p|u_j|}du\\
        =&\int_{\bR^r} |f(Vu+D^T\xi_2)|e^{2\pi a^p/p|u_j|}du\\
        =&\int_{\bR^r} |g_{\xi_2}(u)|e^{2\pi a^p/p|u_j|}du<\i.
    \end{align*}
    Similarly, Condition \eqref{eq:cond2} becomes: for almost every $\xi_2\in A(\ker B)$,\begin{align*}
        \int_{\ker B^{\perp}} |\widehat S f(B\zeta_1+\xi_2)|e^{2\pi b^q/q|\zeta_1^{(j)}|}d\zeta_1=& \int_{\bR^r} |\widehat S f(BV\eta+\xi_2)|e^{2\pi b^q/q|\eta_j|}d\eta\\
        =& \mu_S\int_{\bR^r} |\widehat g_{\xi_2}(\eta)|e^{2\pi b^q/q|\eta_j|}d\eta<\i.
    \end{align*}
    Hence, for some $j=1,\dots,r$, and  $a,b>0$, $g_{\xi_2}\in L^2(\bR^r)$ satisfies:\begin{equation}\label{morgancond}
        \int_{\bR^r} |g_{\xi_2}(u)|e^{2\pi a^p/p|u_j|}du<\i\quad and\quad \int_{\bR^r} |\widehat g_{\xi_2}(\eta)|e^{2\pi b^q/q|\eta_j|}d\eta<\i,
    \end{equation}
     for $a.e.$ $\xi_2\in A(\ker B)$. Therefore, by the Gel'fand-Shilov type UP in Theorem \ref{classicMorgan} we infer the following conditions:\\
\text{i)} if $ab>|\cos(p\pi/2)|^{1/p}$ then $g_{\xi_2}=0$ for $a.e.\ \xi_2\in A(\ker B)$, which, together with the decomposition $x=x_1+x_2$ in \eqref{D1}  implies  $f=0$.\par\noindent
       $\text{ii)}$ if $ab<|\cos(p\pi/2)|^{1/p}$ there exists  a dense subset $H$ of $L^2(\bR^r)$ such that every $g\in H$ satisfies the conditions in \eqref{morgancond}, for all $j=1,\dots,r$. Now, define
        \begin{equation*}
            \mathfrak{D}:=\text{span}\{ f\in\lrd\,:\,f(Vu+D^T\xi_2)=h(\xi_2)g(u)e^{-i\pi(V^TB^+AVu\cdot u-2V^TC^T\xi_2\cdot u)}\}
        \end{equation*}
        where $  g\in H$, $h\in \cC_c(A(\ker B))$.
       For a function $f$ as above we have:\begin{align*}
            \int_{\ker B^{\perp}} |f(x_1+x_2)|e^{2\pi a^p/p|x_1^{(j)}|}dx_1=&\int_{\bR^r} |f(Vu+D^T\xi_2)|e^{2\pi a^p/p|u_j|}du\\
            =&\int_{\bR^r} |h(\xi_2)||g(u)|e^{2\pi a^p/p|u_j|}du\\
            \leq & \norm{h}_{L^{\i}(A(\ker B))}\int_{\bR^r}|g(u)|e^{2\pi a^p/p|u_j|}du<\i,
        \end{align*}
      Using the integral representation of the metaplectic operator $\widehat{S}$ in \eqref{f2} we can write \begin{equation*}
            |\widehat Sf(BV\eta+\xi_2)|=\mu_S |h(\xi_2)||\widehat g(\eta)|,\quad for\ a.e.\ \eta\in\bR^r,\ \xi_2\in A(\ker B).
        \end{equation*}
        Therefore, proceeding as before, \begin{equation*}
            \int_{\ker B^{\perp}} |\widehat S f(B\zeta_1+\xi_2)|e^{2\pi b^q/q|\zeta_1^{(j)}|}d\zeta_1<\i\quad for\ a.e.\ \xi_2\in A(\ker B).
        \end{equation*}
    
The linearity of both the integral and the metaplectic operator $\widehat S$ yields that every  function in $\mathfrak{D}$ satisfies the two Morgan's conditions \eqref{eq:cond1} and \eqref{eq:cond2}.
Finally, the density of $\mathfrak{D}$ in $L^2(\rd)$ follows  from Corollary \ref{densita}.  
\end{proof}

\section*{Acknowledgements}
The  authors have been supported by the Gruppo Nazionale per l’Analisi Matematica, la Probabilità e le loro Applicazioni (GNAMPA) of the Istituto Nazionale di Alta Matematica (INdAM). Gianluca Giacchi has been supported by the SNSF starting grant ``Multiresolution methods for unstructured data” (TMSGI2 211684).

\bibliographystyle{abbrv}

\end{document}